\newtheorem{theorem}{Theorem}[section]
\newtheorem{corollary}[theorem]{Corollary}
\theoremstyle{definition}
\newtheorem{definition}[theorem]{Definition}
\newtheorem{example}[theorem]{Example}
\theoremstyle{remark}
\newtheorem{remark}[theorem]{Remark}
\numberwithin{equation}{section}
\title[Linear dynamics of operators on $B_0(\mathcal{H})$]
{Linear dynamics of operators on $B_0(\mathcal{H})$}
\author[S. Ivkovi\'{c}]{Stefan Ivkovi\'{c}}
\address{The Mathematical Institute of the Serbian Academy of Sciences and Arts,
	p.p. 367, Kneza Mihaila 36, 11000 Beograd, Serbia}
\email{stefan.iv10@outlook.com }
\author[S.M. Tabatabaie]{Seyyed Mohammad Tabatabaie}
\address{Department of Mathematics, University of Qom, Qom, Iran}
\email{sm.tabatabaie@qom.ac.ir}
\subjclass[2010]{47A16}
\keywords{Compact operator, Orthonormal basis, Hypercyclic operator, Topologically transitive operator, Chaos}
\date{\today}
\begin{document}

 \maketitle

\begin{abstract}
In this paper we characterize hypercyclic translation operators on $B_0(\mathcal{H})$, the space of all compact linear operators on a Hilbert space $\mathcal{H}$. Also, we give some sufficient condition for a related  cosine operator function to be chaotic or topologically transitive.
\end{abstract}

\baselineskip17pt
 
\section{Introduction}
Linear dynamic of bounded operators on Banach spaces has been studied during the last decades; see the classic books \cite{bmbook,gpbook} as monographs.
One of the important notions in this field is \emph{hypercyclicity} which arises from the invariant closed subset problem in Mathematical Analysis. This concept is related to some other notions such as topological transitivity, topological mixing and chaos. Especially, hypercyclicity of weighted translation operators on Lebesgue spaces in the context of locally compact groups and hypergroups have been studied; see \cite{ge00, chta2, cc11, chen11}. Recently, the focus was on studying hypercyclicity  of operators on other special function spaces such as Orlicz spaces and solid Banach function spaces; see \cite{ccot, cd18, chta3}.
In this paper, we study some linear dynamical properties of a translation operator on the space of compact linear operators on a given Hilbert space. These operators are also called elementary operators.
In Section 3, as a main result (Theorem \ref{thm32}), we will give some necessary and sufficient conditions for such operators to be   hypercyclic.  This theorem is proved under an orthogonality assumption which plays a key role in the proof. In Example \ref{ex33} we present a class of unitary operators satisfying this property. In this section also we give a sufficient condition for a translation operator to be chaotic. In Section 4, we study linear dynamical properties of related cosine operator functions, and present a sufficient condition for these functions to be chaotic or topologically transitive. See \cite{sha, kostic, kalmes10, chen15, chen141, chache} as studies on some other classes of cosine operator functions. In Section 5, we investigate the above properties for the adjoint of such operators. Moreover, in Example \ref{ex34}  we introduce some operators satisfying the sufficient conditions from our results. 

For convenience of the reader, we first recall some notations and definitions related to linear dynamical systems.
\section{Preliminaries}
If $\mathcal X$ is a Banach space, the set of all bounded linear operators from $\mathcal X$ into $\mathcal X$ is denoted by $B(\mathcal X)$. Also, we denote $\Bbb{N}_0:=\Bbb{N}\cup\{0\}$.
\begin{definition}
	Let $\mathcal X$ be a Banach space. A sequence $(T_n)_{n\in \Bbb{N}_0}$  of operators in $B(\mathcal X)$ is called {\it topologically transitive} if for each non-empty open subsets $U,V$ of
	${\mathcal X}$, $T_n(U)\cap V\neq \varnothing$ for some $n\in \Bbb{N}$. If $T_n(U)\cap V\neq \varnothing$ holds from some $n$ onwards, then
	$(T_n)_{n\in \Bbb{N}_0}$ is called {\it topologically mixing}. 
\end{definition}
\begin{definition}
	Let $\mathcal X$ be a Banach space. A sequence $(T_n)_{n\in \Bbb{N}_0}$  of operators in $B(\mathcal X)$ is called {\it hypercyclic} if there is an element $x\in\mathcal X$ (called \emph{hypercyclic vector}) such that the orbit $\mathcal O_x:=\{T_nx:\,n\in\mathbb N_0\}$ is dense in $\mathcal X$. The set of all hypercyclic vectors of a sequence $(T_n)_{n\in \Bbb{N}_0}$ is denoted by $HC((T_n)_{n\in \Bbb{N}_0})$. If $HC((T_n)_{n\in \Bbb{N}_0})$ is dense in $\mathcal X$, the sequence $(T_n)_{n\in \Bbb{N}_0}$ is called \emph{densely hypercyclic}. An operator $T\in B(\mathcal X)$ is called \emph{hypercyclic} if the sequence $(T^n)_{n\in \mathbb N_0}$ is hypercyclic.
\end{definition}
Note that a sequence $(T_n)_{n\in \Bbb{N}_0}$  of operators in $B(\mathcal X)$ is topologically transitive if and only if it is densely hypercyclic \cite{gpbook}. Also, a Banach space admits a hypercyclic operator if and only if it is separable and infinite-dimensional \cite{bg99}. So, in this paper we assume that Banach spaces are separable and infinite-dimensional.
\begin{definition}
	Let $\mathcal X$ be a Banach space, and $(T_n)_{n\in \Bbb{N}_0}$  be a sequence of operators in $B(\mathcal X)$. A vector $x\in \mathcal X$ is called a {\it periodic element} of $(T_n)_{n\in \Bbb{N}_0}$ if there exists a constant $N\in\mathbb N$ such that for each $k\in\mathbb N$, $T_{kN}x=x$. The set of all periodic elements of $(T_n)_{n\in \Bbb{N}_0}$ is denoted by
${\mathcal P}((T_n)_{n\in \Bbb{N}_0})$. The sequence $(T_n)_{n\in \Bbb{N}_0}$ is called {\it chaotic} if $(T_n)_{n\in \Bbb{N}_0}$ is topologically transitive and ${\mathcal P}((T_n)_{n\in \Bbb{N}_0})$ is dense in ${\mathcal X}$. An operator $T\in B(\mathcal{X})$ is called \emph{chaotic} if the sequence $\{T^n\}_{n\in \Bbb{N}_0}$ is chaotic. 
\end{definition}
\section{Linear Dynamics of Translation Operators}
In this paper, $\mathcal H$ is a Hilbert space. While $\mathcal{H}$ is assumed to be separable, we set $\{e_j\}_{j\in\mathbb{N}}$ as its orthonormal basis. In this case, we denote 
$L_k:=\text{span}\{e_1,\ldots,e_k\}$ for all $k\in\mathbb{N}$.
The set of all bounded linear operators from $\mathcal H$ to $\mathcal H$ is denoted by $B(\mathcal H)$. Also, the set of all compact (finite rank, respectively) elements of $B(\mathcal H)$ is denoted by $B_0(\mathcal H)$ ($B_{00}(\mathcal H)$, respectively). If $\mathcal{M}$ is a closed subspace of $\mathcal{H}$, we denote the orthogonal projection of $\mathcal{H}$ on $\mathcal{M}$ by $P_{\mathcal{M}}$. By \cite[Proposition 3.3. Chapter II]{conw}, $\|P_{\mathcal{M}}\|=1$. The image and kernel of each $F\in B(\mathcal{H})$ are denoted by ${\rm Im}(F)$ and ${\rm Ker}(F)$, respectively. For each $F\in B(\mathcal{H})$ we define 
$$m(F):=\inf_{\|x\|=1}\|F(x)\|=\sup\{C\geq 0:\,\|F(x)\|\geq C\,\|x\|\,\,\text{for all}\,x\in\mathcal{H}\}.$$
Easily, one can see that for each $n\in\mathbb{N}$, $m(F^n)\geq m(F)^n$.
\begin{definition}
	Let $U,W\in B(\mathcal H)$. We define the operator $T_{U,W}:B(\mathcal H)\rightarrow B(\mathcal H)$ by
\begin{equation}
T_{U,W}(F):=WFU
\end{equation}
for all $F\in B(\mathcal H)$.
\end{definition}
Trivially, $T_{U,W}(B_0(\mathcal{H}))\subseteq B_0(\mathcal{H})$. 
If $U$ and $W$ are invertible, then $T_{U,W}$ is invertible and we have
\begin{equation*}
T^{-1}_{U,W}(F)=W^{-1}FU^{-1},\quad(F\in B(\mathcal H)).
\end{equation*}
In this case, simply we put $S_{U,W}:=T^{-1}_{U,W}$.
.
\begin{theorem}\label{thm32}
Let $\mathcal{H}$ be a	separable Hilbert space. Let $W\in B(\mathcal H)$  be invertible and $U\in B(\mathcal H)$ be unitary such that for each $k\in\mathbb{N}$ there exists an $N_k\in\mathbb{N}$ with
\begin{equation}\label{eq000}
U^n(L_k)\perp L_k \quad\text{for all } n\geq N_k.
\end{equation}
 Then, the following statements are equivalent. 
 \begin{itemize}
	\item [(i)] $T_{U,W}$ is hypercyclic on $B_0(\mathcal{H})$, where $B_0(\mathcal{H})$ is equipped with the operator norm $\|\cdot\|$.
	\item [(ii)] For each $m\in\mathbb{N}$ there exist a strictly increasing sequence $\{n_k\}$ in $\mathbb{N}$ and the sequences $\{D_k\}$ and $\{G_k\}$ of operators in $B_0(\mathcal{H})$ such that 
		\begin{equation}\label{cond2}
	\lim_{k\rightarrow\infty}\|D_k-P_m\|=\lim_{k\rightarrow\infty}\|G_k-P_m\|=0,
	\end{equation}
	and
	\begin{equation}\label{cond22}
	\lim_{k\rightarrow\infty}\left\|W^{n_k}G_k \right\|=\lim_{k\rightarrow\infty}\left\|W^{-n_k}D_k \right\|=0,
	\end{equation}
	where $P_m$ denotes the orthogonal projection onto $L_m$.
\end{itemize}
\end{theorem}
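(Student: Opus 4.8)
The plan is to deduce the equivalence from the Hypercyclicity Criterion (in its "sequence" form), using the density of the finite‑rank operators $B_{00}(\mathcal H)$ in $B_0(\mathcal H)$ and the orthogonality hypothesis \eqref{eq000} to manufacture the required dense subsets. The natural dense set to work with is $\mathcal D := \{F\in B_{00}(\mathcal H) : \mathrm{Im}(F)\subseteq L_m,\ \mathrm{Ker}(F)\supseteq L_m^\perp \text{ for some }m\}$, i.e. operators of the form $F = P_m F P_m$; since $P_m\to I$ strongly and every compact operator is approximated in norm by $P_m F P_m$, this set is dense in $B_0(\mathcal H)$.

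For the implication (ii) $\Rightarrow$ (i): fix such an $F = P_m F P_m$ and, given the sequences $\{n_k\}, \{D_k\}, \{G_k\}$ from (ii), I would set $y_k := W^{n_k} G_k F G_k W^{-n_k}$ type corrections — more precisely, one wants forward maps $T_{U,W}^{n_k}(A_k)\to F$ and a right inverse $B_k$ with $T_{U,W}^{n_k}(B_k)\to 0$. Here \eqref{eq000} is exactly what makes the cross terms vanish: since $U$ is unitary and $U^{n}(L_m)\perp L_m$ for $n\ge N_m$, for such $n$ one has $P_m U^{n} P_m = 0$, hence $P_m U^{-n} P_m = (P_m U^n P_m)^* = 0$ as well. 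Thus a finite‑rank operator supported on $L_m$ is, after conjugation by $U^{\pm n_k}$, "pushed off" $L_m$ and its overlap with another such operator disappears. Concretely I expect: to hit $F$ one takes $A_k := W^{-n_k} D_k\, F\, D_k U^{-n_k}$ (so that $T_{U,W}^{n_k}(A_k) = W^{n_k} A_k U^{n_k} = (W^{n_k}W^{-n_k}) D_k F D_k (U^{-n_k}U^{n_k}) \to P_m F P_m = F$ using \eqref{cond2}), while the right‑inverse direction $S_{U,W}^{n_k}(F) = W^{-n_k} F U^{-n_k}$ is killed by first inserting $G_k$: $W^{-n_k} G_k F G_k U^{-n_k}$, whose norm is $\le \|W^{-n_k}G_k\|\,\|F\|\,\|G_k U^{-n_k}\| = \|W^{-n_k}G_k\|\,\|F\|\,\|W^{n_k}G_k\|^{\,\mathrm{ad}}$ — one checks $\|G_k U^{-n_k}\|$ can be controlled by $\|G_k\|$ since $U$ is unitary, giving $\to 0$ by \eqref{cond22}. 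After assembling $A_k$ and $B_k$ on a common dense set and arranging $T_{U,W}^{n_k}(A_k)+B_k\to F$, the Hypercyclicity Criterion yields (i). The delicate point is bookkeeping: one must verify that the approximations hold simultaneously for a countable dense set, which is handled by a diagonal argument over $m$ and over a fixed countable dense subset of each finite‑rank corner.

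For (i) $\Rightarrow$ (ii): assume $T_{U,W}$ is hypercyclic with hypercyclic vector $F_0\in B_0(\mathcal H)$. Fix $m$. Since the orbit of $F_0$ is dense, I can choose $n_k\uparrow\infty$ with $T_{U,W}^{n_k}(F_0) = W^{n_k} F_0 U^{n_k} \to P_m$ in norm. The claim is that $D_k := W^{-n_k} T_{U,W}^{n_k}(F_0) U^{-n_k}\cdot(\text{something})$ — rather, one reads off the two sequences directly. Set $G_k := U^{n_k} F_0^{(k)} U^{-n_k}$-style truncations so that $W^{n_k}G_k \to 0$: here I would exploit that $F_0$ is compact, so $\|F_0 P_{L_j^\perp}\|\to 0$, together with \eqref{eq000} to split $F_0$ into a piece whose $U^{n_k}$‑translate is orthogonal to $L_m$ (contributing the $D_k\to P_m$ part from the dense orbit) and a remainder that $W^{n_k}$ shrinks. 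This direction is lighter than the converse; the only subtlety is extracting $\|W^{-n_k}D_k\|\to 0$, for which one passes to the orbit of $F_0$ under $S_{U,W}=T_{U,W}^{-1}$ and uses that hypercyclicity of an invertible operator gives $S_{U,W}^{n_k}(F_0)\to 0$ along a suitable subsequence, refining $\{n_k\}$ once more. The main obstacle throughout is bookkeeping around \eqref{eq000}: one has to be careful that the index $N_k$ in the orthogonality hypothesis depends on $k$ (i.e. on the subspace $L_k$), so every diagonalization must first fix $m$, then choose $n_k \ge N_m$, and only afterwards let $m$ vary — getting this order wrong would break the argument.
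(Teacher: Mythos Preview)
Your sketch has the right architecture, but the bookkeeping in both directions is off in ways that matter.

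For (ii) $\Rightarrow$ (i): you invoke \eqref{eq000} as ``exactly what makes the cross terms vanish'', but this direction does not use the orthogonality hypothesis at all (the paper even remarks this explicitly after the proof). Topological transitivity is shown directly: given finite-rank $F\in\mathcal O_1$, $G\in\mathcal O_2$, choose $m$ with $P_mF,P_mG$ close to $F,G$, take the sequences from (ii) for this $m$, and set $\Phi_k := G_k F + S_{U,W}^{n_k}(D_k G)$. Then
\[
\|\Phi_k-F\|\le \|G_k-P_m\|\,\|F\|+\|W^{-n_k}D_k\|\,\|G\|+\tfrac{\varepsilon}{2},\qquad
\|T_{U,W}^{n_k}\Phi_k-G\|\le \|W^{n_k}G_k\|\,\|F\|+\|D_k-P_m\|\,\|G\|+\tfrac{\varepsilon}{2},
\]
and no orthogonality enters. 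More concretely, your bound for the ``right-inverse direction'' involves $\|W^{-n_k}G_k\|$, but (ii) gives $\|W^{n_k}G_k\|\to 0$ and $\|W^{-n_k}D_k\|\to 0$: you have paired each sequence with the wrong sign of the power of $W$. With the roles corrected the argument goes through, and the diagonal-over-$m$ step is unnecessary since transitivity only treats one pair $(F,G)$ at a time.

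For (i) $\Rightarrow$ (ii): this is where \eqref{eq000} is essential, and your extraction is too vague to work. The route through a single hypercyclic vector $F_0$ together with ``$S_{U,W}^{n_k}(F_0)\to 0$ along a suitable subsequence'' cannot produce $D_k$ and $G_k$ along the \emph{same} sequence $\{n_k\}$, which (ii) demands. The paper instead uses transitivity: for each $k$ pick $F_k\in B_0(\mathcal H)$ and $n_k>N_m$ with both $\|F_k-P_m\|\le 4^{-k}$ and $\|T_{U,W}^{n_k}F_k-P_m\|\le 4^{-k}$, and then simply defines
\[
G_k:=F_kP_m,\qquad D_k:=W^{n_k}F_kU^{n_k}P_m=(T_{U,W}^{n_k}F_k)P_m.
\]
The four required limits fall out of four one-line estimates, each obtained by right-multiplying $F_k-P_m$ or $T_{U,W}^{n_k}F_k-P_m$ by either $P_m$ or $U^{\pm n_k}P_m$ and using $P_mU^{\pm n_k}P_m=0$. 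That identity is the concrete content of \eqref{eq000}, and your sketch, despite flagging orthogonality as ``the main obstacle'', never actually applies it.
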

\begin{proof}
$\text{(i)}\Rightarrow\text{(ii)}$:  Let $T_{U,W}$ be hypercyclic on $B_0(\mathcal{H})$. Let $m\in\mathbb{N}$. Since $P_m$ belongs to $B_0(\mathcal{H})$, for each $k\in\mathbb{N}$ one can find an operator $F_k\in B_0(\mathcal{H})$ and a number $n_k\in\mathbb{N}$ such that 
\begin{equation}
\|F_k-P_m\|\leq\frac{1}{4^k},\qquad \|T^{n_k}_{U,W}(F_k)-P_m\|\leq \frac{1}{4^k}.
\end{equation}
Clearly, we can assume that $N_m<n_1<n_2<\ldots$. Therefore, $P_mU^{n_k}P_m=0$ and $P_mU^{-n_k}P_m=0$ for all $k\in\mathbb{N}$. Hence, we get 
\begin{align*}
\frac{1}{4^k}&\geq \|F_k-P_m\|\geq \|F_k-P_m\|\,\|U^{n_k}\,P_m\|\\
&\geq \|(F_k-P_m)\,U^{n_k}P_m\|=\|F_kU^{n_k}P_m\|\\
&=\|W^{-n_k}(W^{n_k}F_kU^{n_k}P_m)\|.
\end{align*}
Moreover,
\begin{align*}
\frac{1}{4^k}&\geq \|T^{n_k}_{U,W}(F_k)-P_m\|=\|W^{n_k}F_k U^{n_k}-P_m\|\\
&\geq \|W^{n_k}F_k U^{n_k}-P_m\|\,\|P_m\|\geq \|(W^{n_k}F_k U^{n_k}-P_m)P_m\|\\
&=\|W^{n_k}F_k U^{n_k}P_m-P_m\|.
\end{align*}
Further, 
\begin{align*}
\frac{1}{4^k}&\geq \|W^{n_k}F_k U^{n_k}-P_m\|\geq \|W^{n_k}F_k U^{n_k}-P_m\|\, \|U^{-n_k}P_m\|\\
&\geq \|(W^{n_k}F_k U^{n_k}-P_m)U^{-n_k}P_m\|=\|W^{n_k}F_k P_m\|,
\end{align*}
and
\begin{align*}
\frac{1}{4^k}&\geq \|F_k-P_m\|\geq \|F_k-P_m\|\,\|P_m\|\\
&\geq \|(F_k-P_m)\,P_m\|=\|F_kP_m-P_m\|.
\end{align*}
Set for each $k$, 
$$G_k:=F_kP_m,\qquad D_k:=W^{n_k}F_kU^{n_k}P_m.$$
This completes the proof.

$\text{(ii)}\Rightarrow \text{(i)}$: Assume that the statement $\text{(ii)}$
 holds. Equivalently, we show that $T_{U,W}$ is topologically transitive on $B_0(\mathcal{H})$.  Let $\mathcal{O}_1$ and $\mathcal{O}_2$ be two non-empty open subsets of $B_0(\mathcal{H})$. Since $B_{00}(\mathcal{H})$ is dense in $B_0(\mathcal{H})$, we can pick
$$F\in \mathcal{O}_1\cap B_{00}(\mathcal{H})\quad\text{and}\quad G\in\mathcal{O}_2\cap B_{00}(\mathcal{H}).$$
Choose an $\varepsilon>0$ such that the $\varepsilon$-balls around $F$ and $G$ are contained in $\mathcal{O}_1$ and $\mathcal{O}_2$, respectively. Since $F$ and $G$ are finite rank operators, there exists an $m_\varepsilon$ in $\mathbb{N}$ such that
$$\|P_{m_\varepsilon}F-F\|\leq \frac{\varepsilon}{2},\qquad \|P_{m_\varepsilon}G-G\|\leq \frac{\varepsilon}{2}.$$
By the assumption $\text{(ii)}$, for each $m:=m_\varepsilon$ there exist a strictly increasing sequence $\{n_k\}\subseteq\mathbb{N}$ and the sequences $\{D_k\}$ and $\{G_k\}$ of operators in $B_0(\mathcal{H})$ satisfying the relations \eqref{cond2} and \eqref{cond22}. For each $k\in\mathbb{N}$ put 
$$\Phi_k:=G_kF+S^{n_k}_{U,W}(D_kG).$$
We obtain
\begin{align*}
\|\Phi_k-F\|&\leq \|\Phi_k-P_{m_\varepsilon}F\|+\|P_{m_\varepsilon}F-F\|\\
&\leq \|G_k-P_{m_\varepsilon}\|\,\|F\|+\|W^{-n_k}D_k\|\,\|G\|+\frac{\varepsilon}{2}\rightarrow \frac{\varepsilon}{2},
\end{align*}
as $k\rightarrow\infty$. Further, 
\begin{align*}
\|T^{n_k}_{U,W}(\Phi_k)-G\|&\leq \|T^{n_k}_{U,W}(\Phi_k)-P_{m_\varepsilon}G\|+\|P_{m_\varepsilon}G-G\|\\
&\leq \|W^{n_k}G_kFU^{n_k}+D_kG -P_{m_\varepsilon}G\|+\frac{\varepsilon}{2}\\
&\leq \|W^{n_k}G_k\|\,\|F\|+\|D_k-P_{m_\varepsilon}\|\,\|G\|+\frac{\varepsilon}{2}\\
&\rightarrow \frac{\varepsilon}{2},
\end{align*}
as $k\rightarrow\infty$.
 This implies that for a large enough $k\in\mathbb{N}$ we have
$$T^{n_k}_{U,W}(\mathcal{O}_1)\cap \mathcal{O}_2\neq \varnothing,$$
and therefore, $T_{U,W}$ is topologically transitive.
\end{proof}
Now, we give an example of a unitary operator $U$ satisfying the assumption \eqref{eq000} in Theorem \ref{thm32}. Note that we have  \emph{not} used this assumption on $U$ for the proof of the implication ${\rm (ii)}\Rightarrow{\rm (i)}$ in Theorem \ref{thm32}.
\begin{example}\label{ex33}
	Let $X$ be a topological space. Let $\alpha:X\longrightarrow X$ be invertible, and $\alpha,\alpha^{-1}$ be Borel measurable. We say that $\alpha$ is \emph{aperiodic} if for each compact subset $K$ of $X$, there exists a constant $N>0$ such that for each $n\geq N$, we have $K \cap \alpha^{n}(K)=\varnothing$, where $\alpha^{n}$ means the $n$-fold combination of $\alpha$.
	
	Assume that $\alpha$ is an aperiodic mapping on $\mathbb{N}$ equipped with the discrete topology. So, for each $n\in\mathbb{N}$ there is a number $N_n\in\mathbb{N}$ such that 
	$$\alpha^m(\{1,2,\ldots,n\})\cap\{1,2,\ldots,n\}=\varnothing,$$
	for all $m\geq N_n$. For a separable Hilbert space $\mathcal{H}$, let $\{e_j\}_{j\in\mathbb{N}}$ denote an orthonormal basis of $\mathcal{H}$, and set $U_\alpha(e_j):=e_{\alpha(j)}$ for all $j\in\mathbb{N}$. Then, $U_\alpha$ is a unitary operator on $\mathcal{H}$ satisfying  
	$$U_\alpha^m(L_n)\perp L_n$$
	for each $n\in\mathbb{N}$ and all $m\geq N_n$, where $L_n:=\text{span}\{e_1,\ldots,e_n\}$.
\end{example}
\begin{example}\label{ex34}
	Let $\{e_j\}_{j\in\mathbb{N}}$ be an orthonormal basis for a Hilbert space $\mathcal{H}$. Define $W\in B(\mathcal{H})$ by
	\begin{eqnarray*}
		W(e_j):=
		\left\{
		\begin{array}{ll}
			\frac{1}{2}\,e_{j+2}, & \text{if } j \text{ is odd}, \\ \\
			2\,e_{j-2}, & \text{if } j \text{ is even and } j>2,\\ \\
			e_1, & \text{if } j=2.
		\end{array}
		\right.
	\end{eqnarray*}
Then, $W$ is invertible and $\|W\|=2$. For each fixed $k\in\mathbb{N}$ it is easily checked that $\|W^{2k-1+m}P_{2k}\|=\frac{1}{2^m}$ for all $m\in \mathbb{N}$. Consequently, $\|W^{2k-1+m}P_{2k-1}\|\leq\frac{1}{2^m}$. Further, it is also easily verified that for each $k,m\in\mathbb{N}$ we have
$\|W^{-2k-m}P_{2k+1}\|=\frac{1}{2^{m-1}}$, and this gives that $\|W^{-2k-m}P_{2k}\|\leq\frac{1}{2^{m-1}}$. As above, $P_n$ denotes the orthogonal projection onto $\text{span}\{e_1,\ldots,e_n\}$.

It follows that 
$$\|P_{2k}(W^*)^{2k-1+m}\|=\frac{1}{2^m},\qquad\|P_{2k+1}(W^*)^{-2k-m}\|=\frac{1}{2^{m-1}},$$
for all $k,m\in\mathbb{N}$. The operator $W$ satisfies the conditions of Theorem \ref{thm32}, Theorem \ref{thm38}, Theorem \ref{thm41}, Theorem \ref{thm47} and Theorem \ref{thm51}, whereas $W^*$ satisfies the conditions of Theorem \ref{thm54} and Theorem \ref{thm57}
\end{example}
\begin{definition}
	Let $\mathcal{X}$ be a Banach space, $a\in\mathcal{X}$, and $T\in B(\mathcal{X})$. We say that $T$ is \emph{$a$-transitive} if for each two non-empty open subsets $\mathcal{O}_1$ and $\mathcal{O}_2$ of $\mathcal{X}$ with $a\in \mathcal{O}_1$, there are $m,n\in\mathbb{N}$ such that 
	$$T^n(\mathcal{O}_1)\cap\mathcal{O}_2\neq\varnothing, \qquad T^m(\mathcal{O}_2)\cap\mathcal{O}_1\neq\varnothing.$$
\end{definition}
\begin{theorem}
	Let $U,W\in B(\mathcal{H})$ such that $W$ is invertible and $U$ is unitary. Then, the following statements are equivalent.
	\begin{itemize}
		\item [(i)] $T_{U,W}$ and $S_{U,W}$ are $0$-transitive on $B_0(\mathcal{H})$.
		\item [(ii)] For every finite dimensional subspace $K$ of $\mathcal{H}$ there are strictly increasing sequences $\{n_j\}$ and $\{m_j\}$ in $\mathbb{N}$ and sequences of operators 
		$\{G_j\}$ and $\{D_j\}$ in $B_0(\mathcal{H})$ such that
		\begin{equation}\label{cond3}
		\lim_{j\rightarrow\infty}\|G_j-P_K\|=\lim_{j\rightarrow\infty}\|D_j-P_K\|=0,
		\end{equation}
		and 
		\begin{equation}\label{cond32}
		\lim_{j\rightarrow\infty}\|W^{-m_j}G_j\|=\lim_{j\rightarrow\infty}\|W^{n_j}D_j\|=0.
		\end{equation}
	\end{itemize}
\end{theorem}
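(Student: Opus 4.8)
The plan is to mirror the two-implication structure of the proof of Theorem \ref{thm32}, exploiting that $0$-transitivity only tests pairs of nonempty open sets one of which contains the zero operator; this is exactly what lets one drop the orthogonality hypothesis \eqref{eq000} on $U$. Two elementary facts carry most of the weight. First, since $U$ is unitary so are $U^{n}$ and $U^{-n}$, whence $\|W^{n}FU^{n}\|=\|W^{n}F\|$ and $\|W^{-n}FU^{-n}\|=\|W^{-n}F\|$ for every $F\in B(\mathcal H)$ and $n\in\mathbb N$. Second, since $S_{U,W}=T_{U,W}^{-1}$, the maps $T^{n}_{U,W}$ and $S^{n}_{U,W}$ are mutually inverse bijections of $B_0(\mathcal H)$, so $T^{n}_{U,W}(V)\cap V'\neq\varnothing$ if and only if $S^{n}_{U,W}(V')\cap V\neq\varnothing$ for all subsets $V,V'$; hence in $\text{(ii)}\Rightarrow\text{(i)}$ it will be enough to find iterates of $T_{U,W}$ carrying a small operator near a prescribed finite rank operator and vice versa, the corresponding statements for $S_{U,W}$ then being automatic.

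For $\text{(i)}\Rightarrow\text{(ii)}$ I would fix a finite dimensional subspace $K$; the case $K=\{0\}$ is trivial since then $P_K=0$ and $G_j=D_j=0$ works. For each $j\in\mathbb N$ apply the $0$-transitivity of $T_{U,W}$ to $\mathcal O_1=\{F:\|F\|<1/j\}$ (which contains $0$) and $\mathcal O_2=\{F:\|F-P_K\|<1/j\}$, and keep only the conclusion that $T^{\widetilde{n}_j}_{U,W}(\mathcal O_2)\cap\mathcal O_1\neq\varnothing$ for some $\widetilde{n}_j$: this produces $\widetilde{D}_j\in B_0(\mathcal H)$ and $\widetilde{n}_j\in\mathbb N$ with $\|\widetilde{D}_j-P_K\|<1/j$ and $\|W^{\widetilde{n}_j}\widetilde{D}_j\|=\|W^{\widetilde{n}_j}\widetilde{D}_jU^{\widetilde{n}_j}\|<1/j$. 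The same argument with the $0$-transitivity of $S_{U,W}$ yields $\widetilde{G}_j$ and $\widetilde{m}_j$ with $\|\widetilde{G}_j-P_K\|<1/j$ and $\|W^{-\widetilde{m}_j}\widetilde{G}_j\|<1/j$. The only nonroutine point is forcing $\{n_j\}$ and $\{m_j\}$ to be strictly increasing: the set $\{\widetilde{n}_j:j\in\mathbb N\}$ must be unbounded, because if a value $n_0$ occurred for infinitely many $j$ then $\widetilde{D}_j\to P_K$ and $W^{n_0}\widetilde{D}_j\to0$ along those $j$ would give $W^{n_0}P_K=0$, hence $P_K=0$ as $W^{n_0}$ is invertible, a contradiction; one then passes to a strictly increasing subsequence of indices, relabels, and argues identically for $\{\widetilde{m}_j\}$.

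For $\text{(ii)}\Rightarrow\text{(i)}$ I would take nonempty open $\mathcal O_1\ni0$ and nonempty open $\mathcal O_2$, use density of $B_{00}(\mathcal H)$ to pick a finite rank $G\in\mathcal O_2$ together with $\delta>0$ such that $\{F:\|F-G\|<\delta\}\subseteq\mathcal O_2$ and $\{F:\|F\|<\delta\}\subseteq\mathcal O_1$, and set $K:={\rm Im}(G)$, a finite dimensional subspace with $P_KG=G$. Applying $\text{(ii)}$ to this $K$ and putting $\Psi_j:=D_jG$, one has $\|\Psi_j-G\|=\|(D_j-P_K)G\|\le\|D_j-P_K\|\,\|G\|\to0$ and $\|T^{n_j}_{U,W}(\Psi_j)\|=\|W^{n_j}D_jGU^{n_j}\|=\|W^{n_j}D_jG\|\le\|W^{n_j}D_j\|\,\|G\|\to0$, so $T^{n_j}_{U,W}(\mathcal O_2)\cap\mathcal O_1\neq\varnothing$ for large $j$, equivalently $S^{n_j}_{U,W}(\mathcal O_1)\cap\mathcal O_2\neq\varnothing$. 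Symmetrically, $\Psi'_j:=G_jG$ yields $S^{m_j}_{U,W}(\mathcal O_2)\cap\mathcal O_1\neq\varnothing$ for large $j$, equivalently $T^{m_j}_{U,W}(\mathcal O_1)\cap\mathcal O_2\neq\varnothing$. These four nonemptiness statements are exactly the $0$-transitivity of $T_{U,W}$ and of $S_{U,W}$.

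I expect the genuine obstacle to be only the strict-monotonicity bookkeeping in $\text{(i)}\Rightarrow\text{(ii)}$ above, which is where invertibility of $W$ is actually used; everything else is formal. The conceptual point, absent from Theorem \ref{thm32}, is simply that choosing $\mathcal O_1$ to be a norm ball about the zero operator converts ``$T^{n}_{U,W}(F)\in\mathcal O_1$'' into ``$\|W^{n}FU^{n}\|$ small'', and it is this that makes the orthogonality condition \eqref{eq000} superfluous here.
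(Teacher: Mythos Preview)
Your proof is correct and follows essentially the same route as the paper's: both directions rest on the identity $\|W^{\pm n}FU^{\pm n}\|=\|W^{\pm n}F\|$ (from unitarity of $U$) together with left-multiplying a chosen finite-rank operator by an approximant of $P_K$. You are in fact more thorough than the paper, which omits the strict-monotonicity argument for $\{n_j\},\{m_j\}$ and leaves implicit the use of $S_{U,W}=T_{U,W}^{-1}$ to obtain all four $0$-transitivity conditions from the two it verifies.
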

\begin{proof}
	${\rm (i)}\Rightarrow{\rm (ii)}$: Let $T_{U,W}$ and $S_{U,W}$ be $0$-transitive on $B_0(\mathcal{H})$, and let $K$ be a finite dimensional subspace of $\mathcal{H}$. Then, for each open subsets $\mathcal{O}_1$ and $\mathcal{O}_2$ of $B_0(\mathcal{H})$ with $0\in\mathcal{O}_1$ and $P_K\in\mathcal{O}_2$, there are $m,n\in\mathbb{N}$ such that 
	$$T^n_{U,W}(\mathcal{O}_2)\cap\mathcal{O}_1\neq\varnothing,\qquad S^m_{U,W}(\mathcal{O}_2)\cap\mathcal{O}_1\neq\varnothing.$$
	Using the relations $\|T^n_{U,W}(D)\|=\|W^nD\|$ and  $\|S^m_{U,W}(D)\|=\|W^{-m}D\|$ for all $D\in B_0(\mathcal{H})$, it is straightforward to check that ${\rm (ii)}$ holds.
	
		${\rm (ii)}\Rightarrow{\rm (i)}$: Let $\mathcal{O}_1$ and $\mathcal{O}_2$ be non-empty open subsets of $B_0(\mathcal{H})$ with $0\in\mathcal{O}_1$. Since $B_{00}(\mathcal{H})$ is dense in $B_0(\mathcal{H})$, there are $F\in B_{00}(\mathcal{H})\cap \mathcal{O}_2$ and $\varepsilon>0$ such that whenever $\|D-F\|<\varepsilon$, then $D\in\mathcal{O}_2$. Let $K$ be a finite dimensional subspace of $\mathcal{H}$ satisfying $P_KF=F$. With the corresponding conditions in ${\rm (ii)}$, one can choose $j\in\mathbb{N}$ large enough such that 
		$$\|G_j-P_K\|<\frac{\varepsilon}{\|F\|+1},\qquad \|W^{-m_j}G_j\|<\frac{\delta}{\|F\|+1},$$
		where $\delta>0$ is such that the open ball with center at 0 and radius $\delta$ is included in $\mathcal{O}_1$. Then,
		\begin{align*}
		\|S^{n_j}_{U,W}(G_jF)\|&\leq \|W^{-m_j}G_jF\|\\
		&\leq \|W^{-m_j}G_j\|\,\|F\|<\delta,
		\end{align*}
		so $S^{m_j}_{U,W}(G_jF)\in\mathcal{O}_1$. Moreover, 
		\begin{align*}
		\|G_jF-F\|&=\|(G_j-P_K)F \|\\
		&\leq \|G_j-P_K\|\,\|F\|<\varepsilon,
		\end{align*}
		so $G_jF\in \mathcal{O}_2$. Thus, $S^{m_j}_{U,W}(\mathcal{O}_2)\cap\mathcal{O}_1\neq\varnothing$. Similarly, one can find $j\in\mathbb{N}$ large enough such that 
		$T^{n_j}_{U,W}(\mathcal{O}_2)\cap\mathcal{O}_1\neq\varnothing$, and so the proof is complete.
\end{proof}
\begin{theorem}
	Let $U,W\in B(\mathcal H)$  such that $W$ be invertible and $U$ be unitary. If $T_{U,W}$ is hypercyclic on $B_0(\mathcal{H})$, then $m(W)<1<\|W\|$.
\end{theorem}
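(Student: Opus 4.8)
The plan is to argue by contraposition in two independent steps: I will show that $\|W\|\le 1$ forces every orbit to be bounded, and that $m(W)\ge 1$ forces every orbit to be bounded away from the origin; in either case $T_{U,W}$ cannot be hypercyclic. The common tool is the remark that, since $U$ is unitary, right multiplication by $U^{n}$ is a surjective isometry of $B(\mathcal H)$, so that
$$\|T^{n}_{U,W}(F)\|=\|W^{n}FU^{n}\|=\|W^{n}F\|\qquad(F\in B_0(\mathcal H),\ n\in\mathbb N).$$
I would record this identity first, since everything else follows from it together with submultiplicativity of the operator norm and the elementary bound $m(W^{n})\ge m(W)^{n}$ recalled in Section~2.

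For the inequality $\|W\|>1$: the identity gives $\|T^{n}_{U,W}(F)\|=\|W^{n}F\|\le\|W^{n}\|\,\|F\|\le\|W\|^{n}\|F\|$. If $\|W\|\le 1$ then $\|T^{n}_{U,W}(F)\|\le\|F\|$ for all $n$, hence the orbit $\mathcal O_{F}$ is contained in the closed ball of radius $\|F\|$ and its closure cannot be all of $B_0(\mathcal H)$. Thus no $F$ is hypercyclic, contradicting the hypothesis, and therefore $\|W\|>1$.

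For the inequality $m(W)<1$: a hypercyclic vector $F$ is necessarily nonzero (the orbit of $0$ is $\{0\}$), and its dense orbit must meet, say, the open ball of radius $\frac{1}{2}\|F\|$ about $0$, so $\inf_{n}\|T^{n}_{U,W}(F)\|=0$. On the other hand the identity yields $\|T^{n}_{U,W}(F)\|=\|W^{n}F\|\ge m(W^{n})\|F\|\ge m(W)^{n}\|F\|$; if $m(W)\ge 1$ this is $\ge\|F\|>0$ for every $n$, contradicting the previous sentence. Hence $m(W)<1$, and combined with the first step this gives $m(W)<1<\|W\|$.

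The argument is short and I do not anticipate a genuine obstacle. The two points that must not be glossed over are that the equality $\|W^{n}FU^{n}\|=\|W^{n}F\|$ really uses the surjectivity of $U^{n}$ (i.e. $U$ unitary, not merely isometric), and that a hypercyclic vector is nonzero, so that $\|F\|$ may be cancelled in the final estimate.
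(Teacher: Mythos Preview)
Your argument is correct. The key identity $\|T_{U,W}^{n}(F)\|=\|W^{n}F\|$ (valid because right multiplication by a unitary is an isometric isomorphism of $B(\mathcal H)$) immediately reduces the problem to elementary norm estimates, and both contrapositive branches go through as written. The inequality $\|W^{n}F\|\ge m(W^{n})\|F\|$ follows from $\|W^{n}Fx\|\ge m(W^{n})\|Fx\|$ by taking the supremum over the unit sphere, so that step is sound as well.

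Your route is genuinely different from the paper's. The paper first invokes the preceding theorem characterizing $0$-transitivity of $T_{U,W}$ and $S_{U,W}$ in terms of sequences $\{D_j\},\{G_j\}$ approximating a projection $P_K$ with $\|W^{n_j}D_j\|\to 0$ and $\|W^{-m_j}G_j\|\to 0$; it then reads off $m(W)<1$ and $m(W^{-1})<1$ (equivalently $\|W\|>1$) from those limits. Your argument bypasses that machinery entirely and works straight from the definition of a hypercyclic vector plus submultiplicativity. The advantage of your approach is economy: it is self-contained and does not depend on the earlier structural characterization. The paper's approach, on the other hand, illustrates how the necessary numerical constraints on $W$ are already encoded in the $0$-transitivity condition, which ties the result into the surrounding development. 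Both proofs ultimately rest on the same inequality $\|W^{n}F\|\ge m(W)^{n}\|F\|$, so the difference is in packaging rather than depth.
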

\begin{proof}
	 Suppose that $T_{U,W}$ is hypercyclic on $B_0(\mathcal{H})$, and so 0-transitive. Let $K$ be a  finite dimensional subspace of $\mathcal H$. Then, there are sequences $\{n_j\}$, $\{m_j\}$, $\{G_j\}$ and $\{D_j\}$ satisfying condition ${\rm (ii)}$ of the previous theorem. Since $\|P_K\|=1$, we have 
	 $$\lim_{j\rightarrow\infty}\|G_j\|=\lim_{j\rightarrow\infty}\|D_j\|=1.$$
	 Moreover, 
	 $$m(W)^{n_j}\,\|D_j\|\leq \|W^{n_j}D_j\|\rightarrow 0,$$
	 as $j\rightarrow\infty$, and 
	 $$m(W^{-1})^{m_j}\,\|G_j\|\leq \|W^{-m_j}G_j\|\rightarrow 0,$$
	 as $j\rightarrow\infty$. These imply that $m(W)<1<\|W\|$ since $\|W\|=m(W^{-1})^{-1}$.
\end{proof}
\begin{theorem}\label{thm38}
	Let $U,W\in B(\mathcal H)$  such that $W$ be invertible and $U$ be unitary. Suppose that there is a  finite dimensional subspace $K$ of $\mathcal{H}$ such that for a  constant $N>0$, $U^{n}(K)\perp K$ for all $n\geq N$.
	Then, we have 
	${\rm (i)}\Rightarrow{\rm (ii)}$:
	\begin{itemize}
		\item [(i)] $P_K$ belongs to the closure of $\mathcal{P}(\{S^n_{U,W}\}_{n\in\mathbb{N}_0})$ in $B_0(\mathcal{H})$. 
		\item [(ii)] There exists an increasing sequence $(n_k)$ in $\mathbb{N}$ such that $m(W^{-n_k})\rightarrow 0$ as $k\rightarrow\infty$.
	\end{itemize}
\end{theorem}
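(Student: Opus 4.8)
The plan is to unwind the hypothesis that $P_K$ lies in the closure of $\mathcal{P}(\{S^n_{U,W}\}_{n\in\mathbb{N}_0})$ into a family of nearly-periodic compact operators, and then, using the orthogonality of $U$ together with a single unit vector in $K$, to force $\|W^{M}\|$ to be large along a suitable sequence of exponents $M$. I begin with two elementary reductions. Since $W$ is invertible and $U$ is unitary, $S_{U,W}$ is invertible with inverse $T_{U,W}$, so a fixed point $F$ of $S_{U,W}^{M}$ is automatically a fixed point of $T_{U,W}^{M}$; consequently $\mathcal{P}(\{S^n_{U,W}\}_{n\in\mathbb{N}_0})=\mathcal{P}(\{T^n_{U,W}\}_{n\in\mathbb{N}_0})$, and a periodic element $F$ with period $M_0$ satisfies $W^{M}FU^{M}=F$ for every positive multiple $M$ of $M_0$; in particular we may always assume the period $M$ satisfies $M\geq N$. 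Also, $W^{M}$ is bounded and invertible, so $m(W^{-M})=\|W^{M}\|^{-1}$ by the identity $m(V^{-1})=\|V\|^{-1}$ recorded in the preliminaries (applied to $V=W^{M}$). Finally, we may assume $K\neq\{0\}$ and fix a unit vector $v\in K$.

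The core of the argument is the following estimate. Let $\varepsilon\in(0,1)$. By (i) there is a periodic element $F\in B_0(\mathcal{H})$ with $\|F-P_K\|<\varepsilon$ and period $M\geq N$. On one hand $\|Fv\|\geq\|P_Kv\|-\|(F-P_K)v\|>1-\varepsilon$. On the other hand, evaluating $F=W^{M}FU^{M}$ at $v$ yields $Fv=W^{M}\bigl(F(U^{M}v)\bigr)$; since $M\geq N$ the vector $U^{M}v$ lies in $K^{\perp}$, so $P_K(U^{M}v)=0$ and, since $U$ is unitary, $\|F(U^{M}v)\|=\|(F-P_K)(U^{M}v)\|\leq\|F-P_K\|\,\|U^{M}v\|=\|F-P_K\|<\varepsilon$. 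Hence $1-\varepsilon<\|Fv\|\leq\|W^{M}\|\,\|F(U^{M}v)\|<\|W^{M}\|\,\varepsilon$, so $m(W^{-M})=\|W^{M}\|^{-1}<\varepsilon/(1-\varepsilon)$. Thus to each $\varepsilon\in(0,1)$ we have associated an integer $M_\varepsilon\geq N$ with $m(W^{-M_\varepsilon})<\varepsilon/(1-\varepsilon)$.

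To finish, apply this with $\varepsilon=1/(j+1)$, $j\in\mathbb{N}$, obtaining integers $M_j\geq N$ with $m(W^{-M_j})<1/j$. For each fixed $n$ we have $m(W^{-n})=\|W^{n}\|^{-1}>0$, so no integer $n$ can equal $M_j$ for infinitely many $j$; therefore the sequence $(M_j)_j$ has infinite range, is unbounded, and admits a strictly increasing subsequence $(n_k):=(M_{j_k})_k$. Since $j_k\geq k$, we conclude $m(W^{-n_k})<1/j_k\leq1/k\to0$, which is exactly statement (ii).

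The one place where care is needed is the direction in which the periodicity is used: we must invoke $F=W^{M}FU^{M}$ rather than $F=W^{-M}FU^{-M}$, so that $U^{M}v$ is pushed into $K^{\perp}$, where $F$ nearly annihilates it and a large factor $\|W^{M}\|$ is forced; the $S_{U,W}$-version of the identity would instead yield a lower bound on $\|W^{-M}\|$ and hence control $m(W^{M})$, the wrong quantity. The remaining ingredients — passing from a period to a larger one, the orthogonality bookkeeping, and the subsequence extraction — are routine.
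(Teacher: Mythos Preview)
Your proof is correct and follows essentially the same idea as the paper's: exploit the orthogonality $U^{M}(K)\perp K$ to kill the $P_K$ contribution after one application of the periodicity identity, so that the near-periodic operator $F$ is forced to be small where it should be close to $1$, and the discrepancy must be absorbed by a large power of $W$.

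The tactical execution differs slightly. The paper works with the $S$-form $F_k=W^{-n_k}F_kU^{-n_k}$, right-multiplies by the projection $P_{U^{n_k}(K)}$ to obtain $\|W^{-n_k}F_kP_K\|\leq 1/k^2$, and then invokes the minimum modulus directly via $\|W^{-n_k}y\|\geq m(W^{-n_k})\|y\|$ for a suitably chosen $y=F_kP_Kx_k$. You instead pass to the equivalent $T$-form $F=W^{M}FU^{M}$, evaluate at a single fixed unit vector $v\in K$, and bound $\|W^{M}\|$ from below, reaching $m(W^{-M})=\|W^{M}\|^{-1}$ at the end. Your version avoids the auxiliary choice of $x_k$ and the projection algebra, at the cost of the preliminary observation that $\mathcal{P}(S_{U,W})=\mathcal{P}(T_{U,W})$; the paper's version stays closer to the $S$-operator named in the hypothesis. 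Both give the same quantitative bound $m(W^{-n_k})\lesssim\varepsilon/(1-\varepsilon)$. Your subsequence extraction via ``no $n$ can repeat infinitely often'' is valid, though since periods may be replaced by arbitrary multiples (as you already noted) one can simply choose $n_1<n_2<\cdots$ from the outset, as the paper does.
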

\begin{proof}	
	Let $K$ be a finite dimensional subspace of $\mathcal{H}$ such that $U^n(K)\perp K$ for all $n\geq N$. For each $k\in\mathbb{N}$ there exists an operator $F_k\in B_0(\mathcal{H})$ and a natural number $n_k$ such that
	$$\frac{1}{k^2}\geq \|F_K-P_K\|\quad\text{and}\quad F_k=S_{U,W}^{n_k}(F_k).$$
	We can suppose that $N\leq n_1<n_2<\ldots$. Hence, for each $k\in\mathbb{N}$,
	$$\frac{1}{k^2}\geq \|(F_k-P_K)\,P_K\|=\|F_kP_K-P_K\|.$$
	This gives $\|F_kP_K\|\geq 1-\frac{1}{k^2}$. So, for each $k\in\mathbb{N}$, there exists an $x_k\in\mathcal{H}$ with $x_k\neq 0$ and 
	$$\|F_kP_Kx_k\|\geq (1-\frac{1}{k^2})\,\|x_k\|.$$
	Next, we have 
	\begin{align*}
	\frac{1}{k^2}&\geq \|F_k-P_K\|=\|S_{U,W}^{n_k}(F_k)-P_K\|\\
	&=\|W^{-n_k}F_kU^{-n_k}-P_K\|\\
	&\geq\|(W^{-n_k}F_kU^{-n_k}-P_K)P_{U^{n_k}(K)}\|\\
	&=\|(W^{-n_k}F_kU^{-n_k} U^{n_k}P_K U^{-n_k}\|\\
	&=\|(W^{-n_k}F_kP_K\|.
	\end{align*}
	Hence, for each $k$ we get
	\begin{align*}
	\frac{1}{k^2}\,\|x_k\|&\geq \|(W^{-n_k}F_kP_K\|\,\|x_k\|\\
	&\geq \|(W^{-n_k}F_kP_K(x_k)\|\\
	&\geq (m(W^{-1}))^{n_k}\|F_kP_K x_k\|\\
	&\geq (m(W^{-1}))^{n_k}\,(1-\frac{1}{k^2})\,\|x_k\|.
	\end{align*}
	Dividing on the both sides of this inequality by 
	$\|x_k\|$, we get 
	$$\frac{1}{k^2}\geq (1-\frac{1}{k^2})\,(m(W^{-1}))^{n_k}$$
	for all $k\in\mathbb{N}$. 
\end{proof}
Recalling that $m(W^{-1})=\|W\|^{-1}$, we obtain the following result.
\begin{corollary}
	Let $U,W\in B(\mathcal H)$  such that $W$ be invertible and $U$ be unitary. Suppose that there exists a  finite dimensional subspace $K$ of $\mathcal{H}$ such that for a  constant $N>0$, $U^{n}(K)\perp K$ for all $n\geq N$ and  $\mathcal{P}(\{S^n_{U,W}\}_{n\in\mathbb{N}_0})$ is dense in $B_0(\mathcal{H})$, then 
		$\|W\|>1$.
\end{corollary}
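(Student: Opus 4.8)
The plan is to read this off directly from Theorem \ref{thm38}, with only the observation that its hypothesis (i) is automatically satisfied. First I would note that since $K$ is finite dimensional, the orthogonal projection $P_K$ has finite rank and in particular $P_K\in B_0(\mathcal{H})$. The assumption that $\mathcal{P}(\{S^n_{U,W}\}_{n\in\mathbb{N}_0})$ is dense in $B_0(\mathcal{H})$ therefore says exactly that $P_K$ lies in the closure of $\mathcal{P}(\{S^n_{U,W}\}_{n\in\mathbb{N}_0})$, which is condition (i) of Theorem \ref{thm38}. The standing hypotheses on $U$, $W$ and on the finite dimensional subspace $K$ (namely $U^n(K)\perp K$ for all $n\geq N$) are precisely those of Theorem \ref{thm38}, so we may invoke it.

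Applying Theorem \ref{thm38} gives an increasing sequence $(n_k)$ in $\mathbb{N}$ with $m(W^{-n_k})\to 0$ as $k\to\infty$. Now I would combine this with the elementary inequality $m(F^n)\geq m(F)^n$ recorded in Section 3, applied to $F=W^{-1}$, which yields
\[
m(W^{-n_k})=m\big((W^{-1})^{n_k}\big)\geq m(W^{-1})^{n_k},
\]
together with the identity $m(W^{-1})=\|W\|^{-1}$ recalled just before the corollary. Hence $\|W\|^{-n_k}\leq m(W^{-n_k})\to 0$, so $\|W\|^{-n_k}\to 0$; since $(n_k)$ is increasing, this is possible only if $\|W\|>1$. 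Equivalently, arguing by contradiction: if $\|W\|\leq 1$ then $m(W^{-1})=\|W\|^{-1}\geq 1$, whence $m(W^{-n_k})\geq m(W^{-1})^{n_k}\geq 1$ for every $k$, contradicting $m(W^{-n_k})\to 0$.

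I do not anticipate any real obstacle here: the corollary is a routine specialization of Theorem \ref{thm38}, the only (trivial) point being the identification of the denseness hypothesis with hypothesis (i) of that theorem, and the bookkeeping with $m(W^{-1})=\|W\|^{-1}$ and the power inequality for $m(\cdot)$.
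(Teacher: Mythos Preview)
Your proposal is correct and follows exactly the paper's approach: the paper simply remarks, just before stating the corollary, that $m(W^{-1})=\|W\|^{-1}$ and leaves the deduction from Theorem~\ref{thm38} implicit. Your write-up merely spells out the two trivial steps the paper omits, namely that denseness of $\mathcal{P}(\{S^n_{U,W}\})$ puts $P_K$ in its closure, and that $m(W^{-n_k})\to 0$ together with $m(W^{-1})^{n_k}\le m(W^{-n_k})$ forces $\|W\|^{-1}=m(W^{-1})<1$.
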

Since $T_{U,W}=S_{U^{-1},W^{-1}}$, we can conclude the following fact. Just note that $m(W)=\|W^{-1}\|^{-1}$.
\begin{corollary}
	Let $U,W\in B(\mathcal H)$  such that $W$ be invertible and $U$ be unitary. Suppose that there is a  finite dimensional subspace $K$ of $\mathcal{H}$ such that for a  constant $N>0$, $U^{n}(K)\perp K$ for all $n\geq N$.
	If $\mathcal{P}(\{T^n_{U,W}\}_{n\in\mathbb{N}_0})$ is dense in $B_0(\mathcal{H})$, then  $m(W)<1$.
\end{corollary}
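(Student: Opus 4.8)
The plan is to derive this corollary as an immediate consequence of the preceding one (the one asserting $\|W\|>1$ under density of $\mathcal{P}(\{S^n_{U,W}\}_{n\in\mathbb{N}_0})$) by passing to inverses, exactly as the remark preceding the statement suggests. First I would record the identity relating the two families of operators: since $S_{U,W}(F)=W^{-1}FU^{-1}=T_{U^{-1},W^{-1}}(F)$ for every $F\in B(\mathcal{H})$, we have $T_{U,W}=S_{U^{-1},W^{-1}}$, and hence $T^n_{U,W}=S^n_{U^{-1},W^{-1}}$ for every $n\in\mathbb{N}_0$. Consequently the two operator sequences $\{T^n_{U,W}\}_{n\in\mathbb{N}_0}$ and $\{S^n_{U^{-1},W^{-1}}\}_{n\in\mathbb{N}_0}$ literally coincide, so $\mathcal{P}(\{T^n_{U,W}\}_{n\in\mathbb{N}_0})=\mathcal{P}(\{S^n_{U^{-1},W^{-1}}\}_{n\in\mathbb{N}_0})$; in particular the former is dense in $B_0(\mathcal{H})$ precisely when the latter is.

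Next I would check that the pair $(U^{-1},W^{-1})$ satisfies the hypotheses of the preceding corollary, with the same finite-dimensional subspace $K$ and the same constant $N$. Plainly $W^{-1}$ is invertible and $U^{-1}$ is unitary. For the orthogonality condition, observe that since $U$ is unitary it preserves orthogonality, so for $n\ge N$ the relation $U^{n}(K)\perp K$ is equivalent to $K\perp U^{-n}(K)$, that is, to $(U^{-1})^{n}(K)\perp K$; thus $U^{-1}$ inherits the required property. Applying the preceding corollary to $(U^{-1},W^{-1})$ then gives $\|W^{-1}\|>1$, and since $m(W)=\|W^{-1}\|^{-1}$ for invertible $W$ (as noted just before the statement), we conclude $m(W)<1$, which is the desired inequality.

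I do not expect any genuine obstacle here: the whole argument is the bookkeeping of the substitution $(U,W)\mapsto(U^{-1},W^{-1})$. The only step that is not completely automatic is verifying that the orthogonality hypothesis on $U$ transfers to $U^{-1}$, and even that is immediate from the equivalence $U^{n}(K)\perp K \Leftrightarrow U^{-n}(K)\perp K$ for a unitary $U$.
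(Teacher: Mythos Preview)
Your proposal is correct and follows exactly the paper's own approach: the paper simply notes that $T_{U,W}=S_{U^{-1},W^{-1}}$ and that $m(W)=\|W^{-1}\|^{-1}$, then invokes the preceding corollary. Your write-up just fills in the details (in particular the transfer of the orthogonality hypothesis to $U^{-1}$) that the paper leaves implicit.
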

\begin{remark}
	If $W$ and $U$ are both unitary and there exists an $n\in\mathbb{N}$ with $W^n =U^n =I$, then $T^n = S^n =I$ and so obviously the set of all periodic elements of $\{T^n_{U,W}\}_{n\in\mathbb{N}_0}$ is dense in $B_0(\mathcal{H})$,	
	however $m(W) = m(W^{-1}) =1$. This shows that the above implication does not hold in general, and indeed we need the orthogonality assumption on $U$ in order to obtain that $m(W)$ is strictly less than 1.
\end{remark}
\begin{theorem}\label{thm310}
Let $\mathcal{H}$ be a separable Hilbert space and $U,W\in B(\mathcal H)$  such that $W$ be invertible and $U$ be unitary. Then, we have 
	${\rm (ii)}\Rightarrow{\rm (i)}$:
	\begin{itemize}
		\item [(i)] the operators $T_{U,W}$ and $S_{U,W}$ are chaotic on $B_0(\mathcal{H})$.
		\item [(ii)] For each $m\in\mathbb{N}$ there is a strictly increasing sequence $\{n_k\}\subseteq \mathbb{N}$ such that 
		$$\lim_{k\rightarrow\infty}\sum_{l=1}^\infty\|W^{ln_k}P_m\|=\lim_{k\rightarrow\infty}\sum_{l=1}^\infty\|W^{-ln_k}P_m\|=0,$$
		where the corresponding series are convergent for each $k$.
	\end{itemize}
\end{theorem}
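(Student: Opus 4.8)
The plan is to establish the two defining properties of chaos for $T_{U,W}$ — topological transitivity and density of the set of periodic elements — and then to obtain both for $S_{U,W}$ from the identity $S_{U,W}=T_{U^{-1},W^{-1}}$, using that $U^{-1}$ is again unitary and that hypothesis ${\rm (ii)}$ is symmetric under $W\leftrightarrow W^{-1}$ (it demands $\sum_{l}\|W^{ln_k}P_m\|\to0$ \emph{and} $\sum_{l}\|W^{-ln_k}P_m\|\to0$).

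Topological transitivity of $T_{U,W}$ follows directly from Theorem \ref{thm32}. Given $m\in\mathbb{N}$, let $\{n_k\}$ be the sequence furnished by ${\rm (ii)}$ and set $D_k:=G_k:=P_m\in B_0(\mathcal{H})$; then $\|D_k-P_m\|=\|G_k-P_m\|=0$, while $\|W^{n_k}G_k\|=\|W^{n_k}P_m\|\le\sum_{l=1}^\infty\|W^{ln_k}P_m\|\to0$ and similarly $\|W^{-n_k}D_k\|\to0$. Thus condition ${\rm (ii)}$ of Theorem \ref{thm32} holds, and since the implication ${\rm (ii)}\Rightarrow{\rm (i)}$ there does not use the orthogonality hypothesis \eqref{eq000} on $U$ (as remarked after that theorem), $T_{U,W}$ is topologically transitive on $B_0(\mathcal{H})$.

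For the periodic elements, the central construction is the bilateral orbit sum. Fix $F\in B_0(\mathcal{H})$ with $P_mF=F$ for some $m$, let $\{n_k\}$ be given by ${\rm (ii)}$ for this $m$, and put $\Phi_k:=\sum_{l\in\mathbb{Z}}W^{ln_k}FU^{ln_k}$. Since $U$ is unitary and $P_mF=F$, every term satisfies $\|W^{ln_k}FU^{ln_k}\|=\|W^{ln_k}P_mF\,U^{ln_k}\|\le\|W^{ln_k}P_m\|\,\|F\|$ for each $l\in\mathbb{Z}$, so the series converges absolutely in operator norm by the convergence clause of ${\rm (ii)}$; as its partial sums are compact, $\Phi_k\in B_0(\mathcal{H})$. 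Applying the bounded operator $T^{jn_k}_{U,W}$ termwise and reindexing $l\mapsto l+j$ yields $T^{jn_k}_{U,W}(\Phi_k)=\Phi_k$ for every $j\in\mathbb{N}$, so $\Phi_k$ is a periodic element of $\{T^n_{U,W}\}_{n\in\mathbb{N}_0}$. Moreover
\[
\|\Phi_k-F\|\le\|F\|\Big(\sum_{l=1}^\infty\|W^{ln_k}P_m\|+\sum_{l=1}^\infty\|W^{-ln_k}P_m\|\Big),
\]
which tends to $0$ as $k\to\infty$ by ${\rm (ii)}$. Since $\|(I-P_m)F\|\to0$ as $m\to\infty$ for every compact $F$ (because $P_m\to I$ strongly and $F$ is compact), the operators $F$ with $P_mF=F$ for some $m$ are norm-dense in $B_0(\mathcal{H})$; together with the above, $\mathcal{P}(\{T^n_{U,W}\}_{n\in\mathbb{N}_0})$ is therefore dense in $B_0(\mathcal{H})$. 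Hence $T_{U,W}$ is chaotic, and by the first paragraph so is $S_{U,W}$.

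The step requiring the most care is the bilateral series $\Phi_k$: one must verify its absolute norm convergence (so that the termwise action of $T^{jn_k}_{U,W}$ and the reindexing are legitimate and the limit is compact) and that the bound for $\|\Phi_k-F\|$ truly reduces to the quantities $\|W^{\pm ln_k}P_m\|$ appearing in ${\rm (ii)}$ — which is exactly where $P_mF=F$ and the unitarity of $U$ are used. Note that, unlike in Theorem \ref{thm32}, no orthogonality hypothesis on $U$ is needed at any point here.
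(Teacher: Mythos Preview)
Your proof is correct and follows essentially the same approach as the paper: topological transitivity is deduced from the implication ${\rm (ii)}\Rightarrow{\rm (i)}$ of Theorem \ref{thm32} (which, as you note, does not require the orthogonality hypothesis on $U$), and density of periodic elements is obtained via the bilateral orbit sum $\Phi_k=\sum_{l\in\mathbb{Z}}W^{ln_k}FU^{ln_k}$, controlled by the bounds $\|W^{ln_k}FU^{ln_k}\|\le\|W^{ln_k}P_m\|\,\|F\|$. The only cosmetic difference is that the paper observes directly that the same $\Phi_k$ satisfies $T^{ln_k}_{U,W}(\Phi_k)=\Phi_k=S^{ln_k}_{U,W}(\Phi_k)$, whereas you deduce chaos of $S_{U,W}$ from the symmetry $S_{U,W}=T_{U^{-1},W^{-1}}$; both routes are equally valid.
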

\begin{proof}	
	By Theorem \ref{thm32}, (ii) implies that $T_{U,W}$ and $S_{U,W}$  are topologically transitive. So, it suffices to show that $\mathcal{P}(T^n_{U,W})$ and $\mathcal{P}(S^n_{U,W})$ are dense in $B_0(\mathcal{H})$. Let $\mathcal{O}$ be a non-empty open subset of $B_0(\mathcal{H})$. Then, there exists a finite rank operator $F\in\mathcal{O}$. Since $\|P_mF-F\|\rightarrow 0$ as $m\rightarrow\infty$, there exists an $m_0\in\mathbb{N}$ such that $P_{m_0}F\in\mathcal{O}$. Set $K:=L_{m_0}$. We may in the rest of proof assume that $F=P_KF$. 
	Choose a corresponding sequence $\{n_k\}\subseteq \mathbb{N}$ satisfying the assumption (ii). Observe also that for each $k,l\in\mathbb{N}$,
	\begin{align*}
	\|T^{ln_k}_{U,W}F\|&=\|W^{ln_k}F U^{ln_k}\|=\|W^{ln_k}F\|\\
	&=\|W^{ln_k}P_KF\|\leq \|W^{ln_k}P_K\|\,\,\|F\|.
	\end{align*}
	Similarly,
	\begin{equation*}
	\|S^{ln_k}_{U,W}F\|\leq \|W^{-ln_k}P_K\|\,\,\|F\|.
	\end{equation*}
	Set $$G_k:=\sum_{l=0}^\infty T^{ln_k}_{U,W}(F)+\sum_{l=1}^\infty S^{ln_k}_{U,W}(F).$$
	Easily, we have 
	$$T^{ln_k}_{U,W}(G_k)=G_k=S^{ln_k}_{U,W}(G_k)$$
	for all $l,k\in\mathbb{N}$, and $\lim_{k\rightarrow\infty}G_k=F$ in $B_0(\mathcal{H})$ since
	\begin{align*}
		\|G_k-F\|&\leq \sum_{l=1}^\infty \|T^{ln_k}_{U,W}(F)\|+\sum_{l=1}^\infty \|S^{ln_k}_{U,W}(F)\|\\
		&\leq\|F\|\, \sum_{l=1}^\infty\|W^{ln_k}P_K\|+\|F\|\,\sum_{l=1}^\infty\|W^{-ln_k}P_K\|.
	\end{align*}
	 This completes the proof. 
\end{proof}
\section{Cosine Operator Functions}
In this section, we intend to verify some dynamical properties of the cosine operator functions  related to the operator $T_{U,W}$.
If $U$ and $W$ are invertible, for each $n\in\mathbb{N}_0$ we put
\begin{equation}
C^{(n)}_{U,W}:=\frac{1}{2}\left(T^{n}_{U,W}+S^{n}_{U,W}\right).
\end{equation}
\begin{theorem}\label{thm41}
		Suppose that $U,W\in B(\mathcal{H})$ such that $W$ is invertible and $U$ is unitary. Then, we have ${\rm (ii)}\Rightarrow{\rm (i)}$:
	\begin{itemize}
	\item [(i)] The sequence $(C^{(n)}_{U,W})_{n\in\mathbb{N}_0}$ is topologically transitive on $B_0(\mathcal{H})$.
	\item [(ii)] For each $m\in\mathbb{N}$, there are sequences $(E_k)$ and $(R_k)$ of subspaces of $L_m$ and an strictly increasing sequence $(n_k)$ of positive integers such that $L_m=E_k\oplus R_k$ and
	\begin{equation}
	\lim_{k\rightarrow\infty}\left\|W^{n_k}P_m\right\|=\lim_{k\rightarrow\infty}\left\|W^{-n_k}P_m\right\|=0,
	\end{equation}
		\begin{equation}
	\lim_{k\rightarrow\infty}\left\|W^{2n_k}P_{E_k}\right\|=\lim_{k\rightarrow\infty}\left\|W^{-2n_k}P_{R_k}\right\|=0.
	\end{equation}
	\end{itemize}
\end{theorem}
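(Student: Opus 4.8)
The plan is to follow the template of the implication $\mathrm{(ii)}\Rightarrow\mathrm{(i)}$ in Theorem~\ref{thm32}, reducing topological transitivity of $(C^{(n)}_{U,W})_{n\in\mathbb{N}_0}$ to a single explicit perturbation. Given non-empty open sets $\mathcal{O}_1,\mathcal{O}_2\subseteq B_0(\mathcal{H})$, I would first invoke the density of $B_{00}(\mathcal{H})$ to choose finite-rank operators $F\in\mathcal{O}_1$, $G\in\mathcal{O}_2$ and an $\varepsilon>0$ with the $\varepsilon$-balls around $F$ and $G$ contained in $\mathcal{O}_1,\mathcal{O}_2$; enlarging the truncation index, I may assume $F=P_mF$ and $G=P_mG$ for one common $m\in\mathbb{N}$. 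Applying hypothesis $\mathrm{(ii)}$ to this $m$ produces $n_k\uparrow\infty$, subspaces $E_k,R_k\subseteq L_m$ with $L_m=E_k\oplus R_k$, and the four limit relations. Note that the orthogonality condition on $U$ is \emph{not} needed here: since $U$ is unitary, every factor $U^{\pm n}$ disappears from the norm estimates below.

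The crux is the right ansatz. I would set
\[
\Phi_k:=F+2\,S^{n_k}_{U,W}(P_{R_k}G)+2\,T^{n_k}_{U,W}(P_{E_k}G),
\]
splitting $G$ into a ``backward'' part $P_{R_k}G$ sent into the past by $S^{n_k}_{U,W}$ and a ``forward'' part $P_{E_k}G$ sent into the future by $T^{n_k}_{U,W}$. Using $T^n_{U,W}(X)=W^nXU^n$, $S^n_{U,W}(X)=W^{-n}XU^{-n}$, the identities $T^{n_k}_{U,W}S^{n_k}_{U,W}=S^{n_k}_{U,W}T^{n_k}_{U,W}=\mathrm{id}$, and linearity of $C^{(n_k)}_{U,W}=\tfrac12(T^{n_k}_{U,W}+S^{n_k}_{U,W})$, the ``correct'' halves recombine while the ``wrong'' halves leave the $2n_k$-terms behind:
\[
C^{(n_k)}_{U,W}(\Phi_k)=\tfrac12 T^{n_k}_{U,W}(F)+\tfrac12 S^{n_k}_{U,W}(F)+(P_{E_k}+P_{R_k})G+S^{2n_k}_{U,W}(P_{R_k}G)+T^{2n_k}_{U,W}(P_{E_k}G).
\]
Since $E_k,R_k\subseteq L_m$ we have $\|W^{\pm n_k}P_{E_k}\|\le\|W^{\pm n_k}P_m\|$ and $\|W^{\pm n_k}P_{R_k}\|\le\|W^{\pm n_k}P_m\|$, so from $\|W^{n_k}P_m\|,\|W^{-n_k}P_m\|\to0$ one obtains $\|\Phi_k-F\|\le 2\|W^{-n_k}P_m\|\,\|G\|+2\|W^{n_k}P_m\|\,\|G\|\to0$.

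It then remains to check $C^{(n_k)}_{U,W}(\Phi_k)\to G$. The first two terms satisfy $\|T^{n_k}_{U,W}(F)\|=\|W^{n_k}P_mF\|\le\|W^{n_k}P_m\|\,\|F\|\to0$ and likewise $\|S^{n_k}_{U,W}(F)\|\to0$; the last two satisfy $\|S^{2n_k}_{U,W}(P_{R_k}G)\|\le\|W^{-2n_k}P_{R_k}\|\,\|G\|\to0$ and $\|T^{2n_k}_{U,W}(P_{E_k}G)\|\le\|W^{2n_k}P_{E_k}\|\,\|G\|\to0$ by the second pair of limits; and $(P_{E_k}+P_{R_k})G=P_mG=G$ because the orthogonal decomposition $L_m=E_k\oplus R_k$ gives $P_{E_k}+P_{R_k}=P_m$. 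Hence for all large $k$ we have $\Phi_k\in\mathcal{O}_1$ and $C^{(n_k)}_{U,W}(\Phi_k)\in\mathcal{O}_2$, so $C^{(n_k)}_{U,W}(\mathcal{O}_1)\cap\mathcal{O}_2\neq\varnothing$ with $n_k\in\mathbb{N}$, which is exactly topological transitivity. The one genuinely delicate point is isolating the ansatz for $\Phi_k$ so that precisely the leftover terms $S^{2n_k}_{U,W}(P_{R_k}G)$ and $T^{2n_k}_{U,W}(P_{E_k}G)$ appear, since these are exactly the ones annihilated by the hypotheses $\|W^{-2n_k}P_{R_k}\|,\|W^{2n_k}P_{E_k}\|\to0$; after that, everything is routine bookkeeping, with the direct-sum condition ensuring the surviving main term equals $G$.
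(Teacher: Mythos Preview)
Your proof is correct and follows essentially the same route as the paper: the paper defines the identical test element $V_k:=P_KF+2T^{n_k}_{U,W}(P_{E_k}G)+2S^{n_k}_{U,W}(P_{R_k}G)$ and verifies $V_k\to P_KF$ and $C^{(n_k)}_{U,W}(V_k)\to P_KG$ using exactly the estimates you wrote out. Your write-up is in fact more explicit than the paper's (you display $C^{(n_k)}_{U,W}(\Phi_k)$ and justify $P_{E_k}+P_{R_k}=P_m$ and $\|W^{\pm n_k}P_{E_k}\|\le\|W^{\pm n_k}P_m\|$), though the phrase ``I may assume $F=P_mF$'' is slightly loose---strictly speaking you are replacing $F$ by $P_mF\in\mathcal{O}_1$, not assuming the original $F$ has range in $L_m$.
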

\begin{proof}
	Let $\mathcal{O}_1$ and $\mathcal{O}_2$ be two non-empty open subsets of $B_0(\mathcal{H})$. Since $B_{00}(\mathcal{H})$ is dense in $B_0(\mathcal{H})$, we can pick
	$$F\in \mathcal{O}_1\cap B_{00}(\mathcal{H})\quad\text{and}\quad G\in\mathcal{O}_2\cap B_{00}(\mathcal{H}).$$
	Again, there exists an $m\in\mathbb{N}$ such that $P_mF\in\mathcal{O}_1$ and $P_mG\in\mathcal{O}_2$. Set $K:=L_m$.  So, there are corresponding sequences  $(E_k)$, $(R_k)$  and $(n_k)$ satisfying the condition ${\rm (ii)}$. Hence,
	\begin{equation*}
	\left\|T^{n_k}_{U,W}(P_K F)\right\|=\left\|W^{n_k}P_KFU^{n_k}\right\|\leq \left\|W^{n_k}P_K\right\|\,\|F\|.
	\end{equation*}
	This implies that $\lim_{k\rightarrow\infty}T^{n_k}_{U,W}(P_K F)=0$ in $B_0(\mathcal{H})$.	
	 Similarly, we have
	\begin{align*}
	\lim_{k\longrightarrow \infty} S^{n_k}_{U,W}(P_K F)&=\lim_{k\longrightarrow \infty} T^{n_k}_{U,W}(P_K G)
	=\lim_{k\longrightarrow \infty} S^{n_k}_{U,W}(P_KG)\\
	&=\lim_{k\longrightarrow \infty} T^{2n_k}_{U,W}(P_{E_k}G)=\lim_{k\longrightarrow \infty} S^{2n_k}_{U,W}(P_{R_k}G)=0
	\end{align*}
	in $B_0(\mathcal{H})$. Moreover, easily we obtain that 
	$$\lim_{k\rightarrow\infty} T^{n_k}_{U,W}(P_{E_k}G)=\lim_{k\rightarrow \infty} S^{n_k}_{U,W}(P_{R_k}G)=0$$
	  in $B_0(\mathcal{H})$. Therefore, setting
	$$V_k:=P_K F+2T^{n_k}_{U,W}(P_{E_k}G)+2S^{n_k}_{U,W}(P_{R_k}G),$$
	for all $k$, we have
	$$\lim_{k\longrightarrow \infty}V_k=F\quad\text{ and }\quad\lim_{k\longrightarrow \infty}C^{(n_k)}_{U,W}V_k=G.$$
	
	This completes the proof.	
\end{proof}
\begin{theorem}
	Suppose that $U,W\in B(\mathcal{H})$ such that $W$ is invertible and $U$ is unitary. Let there exist a closed subspace $K$ of $\mathcal{H}$ such that $U^n(K)\perp K$ for all $n\geq N$. Then, ${\rm (i)}\Rightarrow{\rm (ii)}$.
	\begin{itemize}
		\item [(i)] $\mathcal{P}(C^{(n)}_{U,W})$ is dense in $B_0(\mathcal{H})$, and for each $F\in B_0(\mathcal{H})$, $\lim_{n\rightarrow\infty}S^n_{U,W}(F)= 0$ in $B_0(\mathcal{H})$.
		\item [(ii)] $m(W)<1$.
	\end{itemize}
\end{theorem}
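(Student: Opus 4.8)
The plan is to follow the scheme of the proof of Theorem~\ref{thm38}: use the density of $\mathcal{P}(C^{(n)}_{U,W})$ to produce an approximating family of periodic operators, and then exploit the vanishing hypothesis on $(S^n_{U,W})$ to turn the cosine periodicity identity into information about $W^{n}$ alone. First I would fix a unit vector $x\in K$ (we may assume $K\neq\{0\}$) and let $P$ denote the rank-one orthogonal projection onto $\mathrm{span}\{x\}$; since $P\in B_{00}(\mathcal{H})\subseteq B_0(\mathcal{H})$, hypothesis~(i) provides, for each $k\in\mathbb{N}$, a periodic element $F_k$ of $(C^{(n)}_{U,W})_{n\in\mathbb{N}_0}$ with $\|F_k-P\|\le k^{-2}$, together with a period $N_k\in\mathbb{N}$; passing to a multiple we may assume $N_k\ge N$. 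Two elementary remarks will be needed: from $\|(F_k-P)P\|\le k^{-2}$ and $Px=x$ we get $\|F_kx\|\ge 1-k^{-2}$; and since $U^{\ast}=U^{-1}$, the hypothesis $U^n(K)\perp K$ for $n\ge N$ also gives $U^{-n}(K)\perp K$ for $n\ge N$, whence $P\,U^{-m}x=0$ for every $m\ge N$.

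Next I would evaluate the periodicity identity
\[
\tfrac12\left(W^{lN_k}F_kU^{lN_k}+W^{-lN_k}F_kU^{-lN_k}\right)=F_k
\]
at the unit vector $U^{-lN_k}x$. The right-hand side becomes $F_kU^{-lN_k}x$, whose norm is at most $\|P\,U^{-lN_k}x\|+\|(F_k-P)U^{-lN_k}x\|\le k^{-2}$, because $P\,U^{-lN_k}x=0$ (here $lN_k\ge N$). On the left the two terms are $W^{lN_k}F_kx$ and $W^{-lN_k}F_kU^{-2lN_k}x$, the latter of norm $\le\|S^{lN_k}_{U,W}(F_k)\|$; hence $\|W^{lN_k}F_kx\|\le 2k^{-2}+\|S^{lN_k}_{U,W}(F_k)\|$. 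By hypothesis~(i) applied to $F_k\in B_0(\mathcal{H})$ we have $\|S^{lN_k}_{U,W}(F_k)\|\to 0$ as $l\to\infty$, so for each $k$ one may choose $l_k$ so large that, writing $n_k:=l_kN_k$, the sequence $(n_k)$ is strictly increasing and $\|W^{n_k}F_kx\|\le 3k^{-2}$.

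Finally, recalling $m(W^{n})\ge m(W)^{n}$, we obtain
\[
3k^{-2}\ \ge\ \|W^{n_k}F_kx\|\ \ge\ m(W)^{n_k}\,\|F_kx\|\ \ge\ m(W)^{n_k}\,(1-k^{-2}),
\]
so $m(W)^{n_k}\le 3/(k^{2}-1)\to 0$ as $k\to\infty$. Since $n_k\to\infty$, this is impossible when $m(W)\ge 1$; hence $m(W)<1$, which is~(ii).

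The step I expect to be the main obstacle is the middle one: a periodic element of the cosine sequence need not be periodic for $T_{U,W}$ or for $S_{U,W}$ separately, so one must observe that the averaging identity, combined with the decay of $S^n_{U,W}$ on compact operators, still isolates a genuinely small vector $W^{n_k}F_kx$, and that the orthogonality of $K$ under $U^{-n}$ is precisely what is needed to delete the ``diagonal'' contribution $P\,U^{-lN_k}x$. Once that is in place, arranging $(n_k)$ to be strictly increasing while keeping $\|S^{l_kN_k}_{U,W}(F_k)\|$ small is routine bookkeeping.
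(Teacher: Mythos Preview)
Your proof is correct and follows essentially the same strategy as the paper's: approximate a projection by periodic elements of the cosine sequence, use the orthogonality $U^{-n}(K)\perp K$ to annihilate the projection term when evaluating the periodicity identity, invoke the decay of $S^{n}_{U,W}$ on compact operators to control the residual term, and finish via $m(W^{n})\ge m(W)^{n}$. The only tactical difference is that you work with a rank-one projection and evaluate the identity pointwise at the vector $U^{-lN_k}x$, whereas the paper uses the full projection $P_K$, right-multiplies the operator identity by $P_{U^{-n_k}(K)}$, and factors the two summands as $W^{n_k}\bigl[F_k+S^{2n_k}_{U,W}(F_k)\bigr]P_K$; both routes lead to the same inequality $m(W)^{n_k}\le C/k^{2}$.
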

\begin{proof}
By the assumptions, we can choose  a sequence $(F_k)$ in $B_0(\mathcal{H})$ and a strictly increasing sequence $(n_k)\subseteq \mathbb{N}$ with $n_1\geq N$ such that
	\begin{equation}\label{0021}
	\|F_k-P_K\|< \frac{1}{4^k},\quad\|F_k+S^{2n_k}_{U,W}(F_k)-P_K\|<\frac{1}{4^k}
	\end{equation}
	and $C^{(n_k)}_{U,W}F_k=F_k$ for all $k\in\mathbb{N}$, where $P_K$ stands for the orthogonal projection onto $K$. Then, we have
	$$\|(F_k+S^{2n_k}_{U,W}(F_k)-P_K)\,P_K\|<\frac{1}{4^k}$$
	because $\|P_K\|=1$ and so,
		$$\|(F_k+S^{2n_k}_{U,W}(F_k))\,P_K\|>1-\frac{1}{4^k}.$$
	  This means that for each $k\in\mathbb{N}$ there exists some $0\neq x_k\in K$ such that 
	  		$$\|(F_k+S^{2n_k}_{U,W}(F_k))x_k\|>(1-\frac{1}{4^k})\,\|x_k\|.$$
	   Next, we have
	\begin{align*}
	\frac{2}{4^k}\geq 2\|F_k-P_K\|&=\|T^{n_k}_{U,W}(F_k)+S^{n_k}_{U,W}(F_k)-2P_K\|\\
	&=\|W^{n_k}F_k U^{n_k}+W^{-n_k}F_k U^{-n_k}-2P_K\|\\
	&\geq\|\left[W^{n_k}F_k U^{n_k}+W^{-n_k}F_k U^{-n_k}-2P_K\right]P_{U^{-n_k}(K)}\|\\
	&=\|\left[W^{n_k}F_k U^{n_k}+W^{-n_k}F_k U^{-n_k}\right]P_{U^{-n_k}(K)}\|\\
	&=\|\left[W^{n_k}F_k U^{n_k}+W^{-n_k}F_k U^{-n_k}\right]U^{-n_k}P_K U^{n_k}\|\\
	&=\|\left[W^{n_k}F_k U^{n_k}+W^{-n_k}F_k U^{-n_k}\right]U^{-n_k}P_K\|\\
	&=\|W^{n_k}\left[F_k+W^{-2n_k}F_kU^{-2n_k}\right]P_K\|\\
	&=\|W^{n_k}\left[F_k+S^{2n_k}_{U,W}(F_k)\right]P_K\|.
	\end{align*}
Hence, 
\begin{align*}
\frac{2}{4^k}\,\|x_k\|&\geq \|W^{n_k}\left[F_k+S^{2n_k}_{U,W}(F_k)\right]P_K\|\,\|x_k\|\\
&\geq \|W^{n_k}\left[F_k+S^{2n_k}_{U,W}(F_k)\right]P_K x_k\|\\
&=\|W^{n_k}\left[F_k+S^{2n_k}_{U,W}(F_k)\right]x_k\|\\
&\geq m(W^{n_k})\,\|\left(F_k+S^{2n_k}_{U,W}(F_k)\right)x_k\|\\
&\geq m(W^{n_k})\,(1-\frac{1}{4^k})\,\|x_k\|\\
&\geq m(W)^{n_k}\,(1-\frac{1}{4^k})\,\|x_k\|.
\end{align*}

Dividing both sides of the inequality by $\|x_k\|$ we obtain 
$$\frac{2}{4^k-1}\geq m(W)^{n_k}$$
 for all $k$. This implies that $m(W)<1$.
\end{proof}
\begin{corollary}\label{cor43}
Suppose that $U,W\in B(\mathcal{H})$ such that $W$ is invertible and $U$ is unitary. Let there exist a closed subspace $K$ of $\mathcal{H}$ such that $U^n(K)\perp K$ for all $n\geq N$.	If $m(W)> 1$, then $\{C_{U,W}^{(n)}\}$ is not chaotic on $B_0(\mathcal{H})$.
\end{corollary}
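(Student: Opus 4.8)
The plan is to deduce this as an immediate corollary of the preceding theorem, arguing by contraposition. I would begin by supposing that $\{C^{(n)}_{U,W}\}$ is chaotic on $B_0(\mathcal{H})$. By the definition of chaos, $\mathcal{P}(\{C^{(n)}_{U,W}\})$ is then dense in $B_0(\mathcal{H})$, which is exactly the first requirement of hypothesis ${\rm (i)}$ in the preceding theorem.

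The second requirement — that $\lim_{n\to\infty}S^n_{U,W}(F)=0$ in $B_0(\mathcal{H})$ for every $F$ — I would obtain from the assumption $m(W)>1$ alone. Since $W$ is invertible, $\|W^{-1}\|=m(W)^{-1}<1$; since $U$ is unitary, $\|U^{-n}\|=1$ for all $n$; hence, by submultiplicativity of the operator norm,
\[
\|S^n_{U,W}(F)\|=\|W^{-n}FU^{-n}\|\le\|W^{-1}\|^{\,n}\,\|F\|\longrightarrow 0,
\]
as $n\to\infty$, for every $F\in B_0(\mathcal{H})$. So both parts of ${\rm (i)}$ are in force.

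At this point I would invoke the preceding theorem: its standing hypotheses — that $W$ is invertible, that $U$ is unitary, and that there is a closed subspace $K$ of $\mathcal{H}$ with $U^n(K)\perp K$ for all $n\ge N$ — are precisely those assumed in the corollary, so ${\rm (i)}$ forces its conclusion ${\rm (ii)}$, namely $m(W)<1$. This contradicts $m(W)>1$, and the contradiction shows that $\{C^{(n)}_{U,W}\}$ cannot be chaotic on $B_0(\mathcal{H})$.

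I do not expect any genuine obstacle here, since the statement is little more than a reformulation of the preceding theorem. The only point deserving a word of justification is the implication $m(W)>1\Rightarrow\|S^n_{U,W}(F)\|\to 0$, which rests on the identity $m(W)=\|W^{-1}\|^{-1}$ recorded above, together with the estimate $\|W^{-n}\|\le\|W^{-1}\|^{\,n}$ and the fact that $\|U^{-n}\|=1$.
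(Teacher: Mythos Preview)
Your proof is correct and follows essentially the same approach as the paper: both verify that $m(W)>1$ forces $\|W^{-1}\|<1$ and hence $S_{U,W}^n(F)\to 0$ for every $F$, and then invoke the preceding theorem (together with the density of periodic points coming from chaos) to reach the contradiction $m(W)<1$. Your write-up is, if anything, slightly more explicit in laying out the contrapositive structure.
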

\begin{proof}	
	For each $F\in B_0(\mathcal{H})$ and $n\in\mathbb{N}$ we have 
	\begin{equation*}
	\|S_{U,W}^n F\|=\|W^{-n}FU^n\|\leq \|W^{-1}\|^n\,\|FU^n\|=\|W^{-1}\|^n\,\|F\|.
	\end{equation*}
	Now, $\|W^{-1}\|=m(W)^{-1}$, so $\|W^{-1}\|^n\rightarrow 0$ as $n\rightarrow\infty$. Thus, $\lim_{n\rightarrow\infty}S_{U,W}^n F=0$ for all $F\in B(\mathcal{H})$. Because of the previous theorem, the proof is complete.
\end{proof}
Similarly, one can obtain the following results.
\begin{theorem}
	Suppose that $U,W\in B(\mathcal{H})$ such that $W$ is invertible and $U$ is unitary. Assume that there exists a closed subspace $K$ of $\mathcal{H}$ such that $U^n(K)\perp K$ for all $n\geq N$.	We have ${\rm (i)}\Rightarrow {\rm (ii)}$.
	\begin{itemize}
		\item [(i)] $\mathcal{P}(C_{U,W}^{(n)})$ is dense in $B_0(\mathcal{H})$, and $\lim_{n\rightarrow\infty} T_{U,W}^n F=F$ for all $F\in B_0(\mathcal{H})$.
		\item [(ii)] $m(W^{-1})<1$.
	\end{itemize}
\end{theorem}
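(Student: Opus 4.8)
The plan is to run the argument of the preceding theorem almost verbatim, with the roles of $T_{U,W}$ and $S_{U,W}$ interchanged. Using the density of $\mathcal{P}(C^{(n)}_{U,W})$ in $B_0(\mathcal{H})$, for each $k\in\mathbb{N}$ I first pick a periodic vector $F_k$ of $\{C^{(n)}_{U,W}\}$ with $\|F_k-P_K\|<\tfrac{1}{4^{k}}$, where $P_K$ is the orthogonal projection onto $K$; let $M_k$ be a period of $F_k$, so that $C^{(\ell M_k)}_{U,W}F_k=F_k$ for all $\ell$. Applying the hypothesis $\lim_{n\to\infty}T^{n}_{U,W}F=F$ at $F=F_k$, I then choose $n_k$ to be a multiple of $M_k$, with $N\le n_1<n_2<\cdots$, large enough that $\|T^{2n_k}_{U,W}F_k-F_k\|<\tfrac{1}{4^{k}}$. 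This secures simultaneously $C^{(n_k)}_{U,W}F_k=F_k$ and $\|F_k+T^{2n_k}_{U,W}F_k-2P_K\|<\tfrac{3}{4^{k}}$.

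Since $C^{(n_k)}_{U,W}F_k=F_k$ says exactly $W^{n_k}F_kU^{n_k}+W^{-n_k}F_kU^{-n_k}=2F_k$, we get
\[
\tfrac{2}{4^{k}}>2\|F_k-P_K\|=\bigl\|W^{n_k}F_kU^{n_k}+W^{-n_k}F_kU^{-n_k}-2P_K\bigr\|.
\]
I now multiply on the right by $P_{U^{n_k}(K)}$, which has norm $1$. Because $n_k\ge N$ we have $U^{n_k}(K)\perp K$, hence $P_KP_{U^{n_k}(K)}=0$, so the $-2P_K$ term drops out; substituting $P_{U^{n_k}(K)}=U^{n_k}P_KU^{-n_k}$, multiplying through, pulling $W^{-n_k}$ out on the left and discarding the trailing unitary $U^{-n_k}$ gives
\[
\tfrac{2}{4^{k}}>\bigl\|W^{-n_k}\bigl(F_k+T^{2n_k}_{U,W}F_k\bigr)P_K\bigr\|.
\]
From $\|F_k+T^{2n_k}_{U,W}F_k-2P_K\|<\tfrac{3}{4^{k}}$ together with $P_K^{2}=P_K$ and $\|P_K\|=1$, it follows that $\|(F_k+T^{2n_k}_{U,W}F_k)P_K\|>2-\tfrac{3}{4^{k}}$, so there is $0\ne x_k\in K$ with $\|(F_k+T^{2n_k}_{U,W}F_k)x_k\|>(2-\tfrac{3}{4^{k}})\|x_k\|$. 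Using $P_Kx_k=x_k$, the definition of $m(\cdot)$, and $m(W^{-n_k})\ge m(W^{-1})^{n_k}$,
\[
\tfrac{2}{4^{k}}\,\|x_k\|\ \ge\ \bigl\|W^{-n_k}\bigl(F_k+T^{2n_k}_{U,W}F_k\bigr)x_k\bigr\|\ \ge\ m(W^{-1})^{n_k}\Bigl(2-\tfrac{3}{4^{k}}\Bigr)\|x_k\|.
\]
Dividing by $\|x_k\|$ yields $m(W^{-1})^{n_k}\le\tfrac{2}{2\cdot 4^{k}-3}\to 0$; since $n_k\ge 1$, this forces $m(W^{-1})<1$.

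The only point requiring care is the joint choice of $F_k$ and $n_k$ in the first paragraph: $n_k$ must be a multiple of the period $M_k$ of $F_k$, so that $C^{(n_k)}_{U,W}F_k=F_k$, and at the same time it must be large enough that $T^{2n_k}_{U,W}F_k$ is within $\tfrac{1}{4^{k}}$ of $F_k$. Both halves of hypothesis (i) enter precisely here, and such a choice exists because $\{\,2\ell M_k:\ell\in\mathbb{N}\,\}$ is unbounded while $T^{m}_{U,W}F_k\to F_k$ as $m\to\infty$. Everything afterwards is the same operator-product manipulation as in the preceding proof, with $P_{U^{n_k}(K)}$ in place of $P_{U^{-n_k}(K)}$, so no new difficulty appears.
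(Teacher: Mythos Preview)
Your proof is correct and follows essentially the same approach that the paper intends: the paper gives no separate proof for this theorem, writing only ``Similarly, one can obtain the following results,'' referring back to the preceding theorem whose argument you have faithfully mirrored with $T_{U,W}$ and $S_{U,W}$ interchanged, $P_{U^{n_k}(K)}$ in place of $P_{U^{-n_k}(K)}$, and $m(W^{-1})$ in place of $m(W)$. Your handling of the hypothesis $T^{n}_{U,W}F\to F$ (rather than $\to 0$), which produces the target $2P_K$ and the constant $2-\tfrac{3}{4^k}$ instead of $1-\tfrac{1}{4^k}$, is the only adjustment needed and you carry it out correctly.
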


\begin{corollary}\label{cor45}
	Suppose that $U,W\in B(\mathcal{H})$ such that $W$ is invertible and $U$ is unitary. Let there exist a closed subspace $K$ of $\mathcal{H}$ such that $U^n(K)\perp K$ for all $n\geq N$. If $m(W^{-1})>1$, then $\{C_{U,W}^{(n)}\}$ is not chaotic on $B_0(\mathcal{H})$.	
\end{corollary}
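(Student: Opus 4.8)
The plan is to mirror the argument used to prove Corollary \ref{cor43}, simply exchanging the roles of $T_{U,W}$ and $S_{U,W}$ (equivalently, replacing $W$ by $W^{-1}$). First I would recall from the identity $S_{U,W}=T^{-1}_{U,W}=T_{U^{-1},W^{-1}}$ that interchanging the two operators corresponds to passing from $W$ to $W^{-1}$, and that the orthogonality hypothesis $U^n(K)\perp K$ for $n\ge N$ is symmetric in $U$ and $U^{-1}$, so it carries over verbatim. Thus the hypothesis $m(W^{-1})>1$ in this corollary plays exactly the role that $m(W)>1$ played in Corollary \ref{cor43}.

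The key step is the norm estimate for the forward iterates: for each $F\in B_0(\mathcal{H})$ and each $n\in\mathbb{N}$,
\begin{equation*}
\|T_{U,W}^n F\|=\|W^n F U^n\|\le \|W\|^n\,\|F U^n\|=\|W\|^n\,\|F\|,
\end{equation*}
using that $U$ is unitary so $\|FU^n\|=\|F\|$. Now $\|W\|=m(W^{-1})^{-1}$, and since we assume $m(W^{-1})>1$ we get $\|W\|<1$, hence $\|W\|^n\to 0$ as $n\to\infty$. Therefore $\lim_{n\to\infty}T_{U,W}^n F=0$ in $B_0(\mathcal{H})$ for every $F\in B_0(\mathcal{H})$; in particular $\lim_{n\to\infty}T_{U,W}^n F=F$ fails for every nonzero $F$.

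Next I would invoke the preceding theorem (the $T$-analogue of the theorem behind Corollary \ref{cor43}): under the standing hypotheses on $U$, $W$ and $K$, if $\mathcal{P}(C_{U,W}^{(n)})$ were dense in $B_0(\mathcal{H})$ \emph{and} $\lim_{n\to\infty}T_{U,W}^n F=F$ held for all $F\in B_0(\mathcal{H})$, then one would conclude $m(W^{-1})<1$, contradicting our assumption. Hence at least one of the two conditions fails; but we have just shown the second one does hold in a degenerate sense only — more precisely, since $T_{U,W}^n F\to 0$ for all $F$, the condition ``$T_{U,W}^n F\to F$ for all $F$'' can hold only if $F=0$, which is not dense, so that hypothesis of the theorem is not satisfiable nontrivially. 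The cleanest way to finish: chaos of $\{C_{U,W}^{(n)}\}$ requires in particular that $\mathcal{P}(\{T_{U,W}^{n}\}_{n})$-type periodic structure be compatible with transitivity; combining density of periodic points with the established fact $T_{U,W}^n F\to 0$ forces, along a periodic orbit $T_{U,W}^{kN}G=G$, that $G=\lim_k T_{U,W}^{kN}G=0$, so the only periodic point is $0$ and $\mathcal{P}$ is not dense. Therefore $\{C_{U,W}^{(n)}\}$ is not chaotic.

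I do not expect a serious obstacle here; the only point requiring mild care is bookkeeping the symmetry $S_{U,W}\leftrightarrow T_{U,W}$, $W\leftrightarrow W^{-1}$ so that the hypotheses of the preceding theorem are applied in the correct (swapped) form, and ensuring that the orthogonality condition on $K$ is genuinely insensitive to replacing $U$ by $U^{-1}$.
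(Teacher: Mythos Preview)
Your overall strategy---mirror Corollary~\ref{cor43} by swapping $T_{U,W}\leftrightarrow S_{U,W}$ and $W\leftrightarrow W^{-1}$---is exactly the paper's intended argument (the paper simply writes ``Similarly, one can obtain the following results''). Your norm estimate $\|T_{U,W}^nF\|\le\|W\|^n\|F\|$ together with $\|W\|=m(W^{-1})^{-1}<1$ is correct and is the heart of the proof, and your remark that the orthogonality hypothesis is symmetric under $U\mapsto U^{-1}$ is also correct.

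The genuine gap is in your final paragraph. You get entangled with the hypothesis ``$T_{U,W}^nF\to F$'' appearing in the preceding theorem, and then try to rescue the argument by reasoning about ``a periodic orbit $T_{U,W}^{kN}G=G$''. But a periodic point of $\{C_{U,W}^{(n)}\}$ satisfies $\tfrac12\bigl(T_{U,W}^{kN}G+S_{U,W}^{kN}G\bigr)=G$, \emph{not} $T_{U,W}^{kN}G=G$; from $T_{U,W}^nG\to 0$ you only get $S_{U,W}^{kN}G\to 2G$, which yields no contradiction since $\|S_{U,W}^{kN}\|$ is unbounded. So this salvage does not work.

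The resolution is simpler: the condition ``$\lim_{n\to\infty}T_{U,W}^nF=F$'' in the preceding theorem is a typographical slip for ``$\lim_{n\to\infty}T_{U,W}^nF=0$'' (apply the proof of Theorem~4.2 with $(U,W)$ replaced by $(U^{-1},W^{-1})$ and use $S_{U^{-1},W^{-1}}=T_{U,W}$, $C_{U^{-1},W^{-1}}^{(n)}=C_{U,W}^{(n)}$). With that correction your first plan goes through verbatim: if $\{C_{U,W}^{(n)}\}$ were chaotic then $\mathcal{P}(C_{U,W}^{(n)})$ would be dense, and together with $T_{U,W}^nF\to 0$ the corrected theorem gives $m(W^{-1})<1$, contradicting the hypothesis. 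Equivalently, just invoke Corollary~\ref{cor43} directly for the pair $(U^{-1},W^{-1})$.
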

\begin{corollary}
Suppose that $U,W\in B(\mathcal{H})$ such that $W$ is invertible and $U$ is unitary. Let there exist a closed subspace $K$ of $\mathcal{H}$ such that $U^n(K)\perp K$ for all $n\geq N$.	If $\{C_{U,W}^{(n)}\}$ is  chaotic on $B_0(\mathcal{H})$, then $m(W), m(W^{-1})<1$, or equivalently, $m(W)<1<\|W\|$.
\end{corollary}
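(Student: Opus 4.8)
The plan is to extract the non-strict inequalities from Corollaries~\ref{cor43} and~\ref{cor45} and then to remove the two borderline possibilities $m(W)=1$ and $m(W^{-1})=1$. Chaos of $(C_{U,W}^{(n)})_n$ gives in particular that this sequence is topologically transitive and that $\mathcal{P}(\{C_{U,W}^{(n)}\})$ is dense in $B_0(\mathcal{H})$. Reading Corollaries~\ref{cor43} and~\ref{cor45} contrapositively, a chaotic $(C_{U,W}^{(n)})_n$ can have neither $m(W)>1$ nor $m(W^{-1})>1$; hence $m(W)\le1$ and $m(W^{-1})\le1$, which, via the identities $m(W^{-1})=\|W\|^{-1}$ and $m(W)=\|W^{-1}\|^{-1}$, are the same as $\|W\|\ge1$ and $\|W^{-1}\|\ge1$. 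It remains to upgrade $m(W)\le1$ to $m(W)<1$, and likewise $m(W^{-1})\le1$ to $m(W^{-1})<1$. For the latter I would use the symmetry $S_{U,W}=T_{U^{-1},W^{-1}}$, which gives $C_{U,W}^{(n)}=C_{U^{-1},W^{-1}}^{(n)}$ as a sequence and under which $U^{-1}$ inherits the orthogonality hypothesis of $U$ (since $U^{n}(K)\perp K\Leftrightarrow U^{-n}(K)\perp K$ for a unitary $U$); so it is enough to exclude $m(W)=1$.

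So suppose $m(W)=1$. Then $\|W^{-n}\|=m(W^{n})^{-1}\le m(W)^{-n}=1$ for every $n$; in particular $\|W^{-1}\|\le1$, so each sequence $n\mapsto\|W^{-n}y\|$ ($y\in\mathcal{H}$) is non-increasing, hence convergent. I would then split into two cases. If some $y\ne0$ has $\lim_{n}\|W^{-n}y\|=c>0$, take $G$ to be the rank-one operator $x\mapsto\langle x,\xi\rangle\,y$ with $\xi$ a unit vector, so that $\|W^{-n}G\|=\|W^{-n}y\|\ge c$ for all $n$; then for any $F$ with $\|F\|<\varepsilon$ one estimates, writing $W^{n}FU^{n}-2G=W^{n}(FU^{n}-2W^{-n}G)$ and using $m(W^{n})\ge1$ and $\|W^{-n}\|\le1$, that $\|C_{U,W}^{(n)}F-G\|\ge\|W^{-n}G\|-\varepsilon\ge c-\varepsilon$, uniformly in $n$; taking $\varepsilon<c$, the $\varepsilon$-ball about $0$ is thus never carried near $G$ by any $C_{U,W}^{(n)}$, contradicting transitivity. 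If instead $\lim_{n}\|W^{-n}y\|=0$ for every $y$, then $W^{-n}\to0$ strongly, so $S_{U,W}^{n}(F)=W^{-n}FU^{n}\to0$ first for every finite rank $F$ and then, using $\|S_{U,W}^{n}\|\le1$ and the density of $B_{00}(\mathcal{H})$, for every $F\in B_0(\mathcal{H})$; together with the density of $\mathcal{P}(\{C_{U,W}^{(n)}\})$ this is condition (i) of the theorem preceding Corollary~\ref{cor43}, and that theorem then forces $m(W)<1$, contradicting $m(W)=1$. Hence $m(W)<1$, and by the symmetry above $m(W^{-1})<1$ as well; since $m(W^{-1})=\|W\|^{-1}$, this is precisely $m(W)<1<\|W\|$.

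The main obstacle is this last step, the elimination of the boundary values. Corollaries~\ref{cor43} and~\ref{cor45} on their own only give $m(W),m(W^{-1})\le1$, and $m(W)=1$ can genuinely coexist with $m(W^{-1})<1$ (for instance $W=\operatorname{diag}(1+\tfrac1i)_i$), so obtaining the strict inequalities really does require the monotonicity of $n\mapsto\|W^{-n}y\|$ available when $m(W)=1$ together with the two-case dichotomy above — the first case resting on topological transitivity, the second on the theorem behind Corollary~\ref{cor43}.
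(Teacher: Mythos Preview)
Your proof is correct, and in fact more thorough than the paper's own argument, which consists of the single line ``Combine Corollaries~\ref{cor43} and~\ref{cor45}.'' As you rightly observe, the contrapositives of those two corollaries only yield $m(W)\le1$ and $m(W^{-1})\le1$; the paper does not address the boundary cases $m(W)=1$ or $m(W^{-1})=1$ at all, so taken literally its proof leaves a gap that your argument fills.

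Your dichotomy for $m(W)=1$ is sound. In Subcase~A the key estimate works exactly as you sketch: with $\|W^{-n}\|\le1$ and $m(W^{n})\ge1$ one has
\[
\|C_{U,W}^{(n)}F-G\|\ge\tfrac12\|FU^{n}-2W^{-n}G\|-\tfrac12\|W^{-n}FU^{-n}\|\ge\|W^{-n}G\|-\varepsilon\ge c-\varepsilon,
\]
so the $\varepsilon$-ball about $0$ never reaches the $(c-\varepsilon)$-ball about the rank-one $G$, contradicting topological transitivity. In Subcase~B the uniform bound $\|S_{U,W}^{n}\|\le\|W^{-n}\|\le1$ combined with strong convergence $W^{-n}\to0$ indeed upgrades $S_{U,W}^{n}F\to0$ from finite-rank $F$ to all of $B_0(\mathcal{H})$, and then the theorem preceding Corollary~\ref{cor43} forces $m(W)<1$. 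The symmetry reduction via $C_{U,W}^{(n)}=C_{U^{-1},W^{-1}}^{(n)}$ and the transfer of the orthogonality hypothesis to $U^{-1}$ (using $U^{n}(K)\perp K\Leftrightarrow U^{-n}(K)\perp K$) are also correct. So your route differs from the paper's by actually supplying the missing boundary analysis; what the paper's one-liner buys is brevity, what yours buys is completeness.
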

\begin{proof}
Combine Corollaries \ref{cor43} and \ref{cor45}.
\end{proof}
The next result can be proved in a similar way as Theorem \ref{thm310}.
\begin{theorem}\label{thm47}
	Let $\mathcal{H}$ be a separable Hilbert space.
	We have 
	${\rm (ii)}\Rightarrow {\rm (i)}$:
	\begin{itemize}
		\item [(i)] The sequence $\{C_{U,W}^{(n)}\}$ is  chaotic on $B_0(\mathcal{H})$.
		\item [(ii)] For each $m\in\mathbb{N}$, there exists a strictly increasing  sequence $\{n_k\}\subseteq\mathbb{N}$ such that 
		\begin{equation}\label{eq111}
		\lim_{k\rightarrow\infty} \sum_{l=1}^\infty \|W^{ln_k}P_{m}\|=\lim_{k\rightarrow\infty} \sum_{l=1}^\infty \|W^{-ln_k}P_{m}\|=0,
		\end{equation}
		where the corresponding series are convergent for each $k$.
	\end{itemize}
\end{theorem}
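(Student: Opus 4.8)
The plan is to follow the proof of Theorem~\ref{thm310} almost verbatim, with two small substitutions: the appeal to Theorem~\ref{thm32} (which supplied topological transitivity) is replaced by an appeal to Theorem~\ref{thm41}, and one uses the elementary observation that a \emph{common} periodic element of $\{T^n_{U,W}\}_{n\in\mathbb{N}_0}$ and $\{S^n_{U,W}\}_{n\in\mathbb{N}_0}$ is automatically a periodic element of $\{C^{(n)}_{U,W}\}_{n\in\mathbb{N}_0}$, since $C^{(n)}_{U,W}G=\tfrac12\big(T^n_{U,W}G+S^n_{U,W}G\big)=G$ whenever $T^n_{U,W}G=S^n_{U,W}G=G$.

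First I would establish topological transitivity. Fix $m\in\mathbb{N}$ and let $\{n_k\}$ be the strictly increasing sequence given by \eqref{eq111}. I claim the hypotheses of Theorem~\ref{thm41}(ii) hold for this $m$ with the degenerate splitting $L_m=E_k\oplus R_k$, where $E_k:=L_m$ and $R_k:=\{0\}$, so that $P_{E_k}=P_m$ and $P_{R_k}=0$. Indeed $\|W^{n_k}P_m\|$, $\|W^{-n_k}P_m\|$ and $\|W^{2n_k}P_m\|$ are each a single (nonnegative) term of one of the convergent series in \eqref{eq111}, hence each tends to $0$ as $k\to\infty$, while $\|W^{-2n_k}P_{R_k}\|=0$; this is precisely what Theorem~\ref{thm41}(ii) requires. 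Theorem~\ref{thm41} then yields that $(C^{(n)}_{U,W})_{n\in\mathbb{N}_0}$ is topologically transitive on $B_0(\mathcal{H})$.

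Next I would prove density of the periodic elements. Let $\mathcal{O}$ be a nonempty open subset of $B_0(\mathcal{H})$; choose a finite rank $F\in\mathcal{O}$ and, since $\|P_mF-F\|\to0$, an $m_0\in\mathbb{N}$ with $P_{m_0}F\in\mathcal{O}$. Set $K:=L_{m_0}$ and assume $F=P_KF$. With $\{n_k\}$ as in \eqref{eq111} for $m=m_0$, put
\[
G_k:=\sum_{l=0}^{\infty}T^{ln_k}_{U,W}(F)+\sum_{l=1}^{\infty}S^{ln_k}_{U,W}(F).
\]
The series converge absolutely in operator norm, because $\|T^{ln_k}_{U,W}(F)\|=\|W^{ln_k}P_KF\|\le\|W^{ln_k}P_K\|\,\|F\|$ (using that $U$ is unitary and $F=P_KF$) together with $\sum_{l}\|W^{ln_k}P_K\|<\infty$, and symmetrically for the $S$-summand; hence each $G_k$ is a norm limit of finite rank operators, so $G_k\in B_0(\mathcal{H})$. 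A shift of the summation index gives $T^{n_k}_{U,W}(G_k)=G_k=S^{n_k}_{U,W}(G_k)$, so by the observation above $C^{(ln_k)}_{U,W}(G_k)=G_k$ for all $l$, i.e. $G_k\in\mathcal{P}(\{C^{(n)}_{U,W}\})$. Finally
\[
\|G_k-F\|\le\|F\|\Big(\sum_{l=1}^{\infty}\|W^{ln_k}P_K\|+\sum_{l=1}^{\infty}\|W^{-ln_k}P_K\|\Big)\longrightarrow 0
\]
by \eqref{eq111}, so $G_k\in\mathcal{O}$ for $k$ large; thus $\mathcal{P}(\{C^{(n)}_{U,W}\})$ is dense in $B_0(\mathcal{H})$. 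Together with transitivity this gives that $\{C^{(n)}_{U,W}\}$ is chaotic.

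I do not expect a genuine obstacle: the only points needing a little care are checking that the degenerate decomposition $E_k=L_m$, $R_k=\{0\}$ is admissible in Theorem~\ref{thm41}(ii)---it is, since $\{0\}$ is a subspace and the condition on $R_k$ then becomes vacuous---and the reindexing computation showing $G_k$ is a common fixed point of $T^{n_k}_{U,W}$ and $S^{n_k}_{U,W}$, which is the same bookkeeping as in the proof of Theorem~\ref{thm310}.
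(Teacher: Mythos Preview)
Your proposal is correct and follows exactly the approach the paper indicates: the paper gives no detailed proof but simply states that the result ``can be proved in a similar way as Theorem~\ref{thm310},'' and you have carried this out, replacing the appeal to Theorem~\ref{thm32} by the appropriate appeal to Theorem~\ref{thm41} and observing that a common fixed point of $T^{n_k}_{U,W}$ and $S^{n_k}_{U,W}$ is a periodic element of $\{C^{(n)}_{U,W}\}$. The degenerate splitting $E_k=L_m$, $R_k=\{0\}$ is admissible and the reindexing check is the same as in Theorem~\ref{thm310}.
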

\begin{remark}
	Our sufficient conditions for topological transitivity in the norm topology of $B_0(\mathcal{H})$ in Theorem \ref{thm32} and Theorem \ref{thm41} are also sufficient conditions for topological transitivity in the strong topology of $B(\mathcal{H})$. Indeed, since $\{e_n\}$ is an orthonormal basis for $\mathcal{H}$, it is easily seen that the set $\{P_nF:\,F\in B(\mathcal{H}),\,n\in\mathbb{N}\}$  is dense in $B(\mathcal{H})$ in the strong operator topology. Moreover, in this case the conditions \eqref{cond2}-\eqref{cond22} in Theorem \ref{thm32} can even be relaxed by considering the strong limits instead of the limit in norm and by dropping the requirement that the sequences $\{D_k\}$ and $\{G_k\}$ should belong to $B_0(\mathcal{H})$. Hence, also in the case of strong operator topology  on $B(\mathcal{H})$, the operator $W$ in Example \ref{ex34} satisfies the sufficient conditions for topological transitivity of $T_{U,W}$ and $\{C_{U,W}^{(n)}\}_n$.
\end{remark}
\begin{remark}
	Except from the implication ${\rm (i)}\Rightarrow{\rm (ii)}$ in Theorem \ref{thm32}, all our results about sufficient conditions for topological transitivity, easily generalize to the case where $B_0(\mathcal{H})$ is replaced by an arbitrary non-unital $C^*$-algebra $\mathcal{A}$, and the set of all finite rank orthogonal projections on $\mathcal{H}$ is replaced by the canonical approximate unit in $\mathcal{A}$. Indeed, if $\mathcal{A}$ is a non-unital $C^*$-algebra, then it can be isometrically embedded into a unital $C^*$-algebra $\mathcal{A}_1$ such that $\mathcal{A}$ becomes an ideal in $\mathcal{A}_1$. If $u$ and $w$ are invertible elements in $\mathcal{A}_1$ and $u$ is unitary (i.e. $uu^*=u^*u=1_{\mathcal{A}_1}$), then we can define the operator $T_{u,w}$ on $\mathcal{A}$ by
	$T_{u,w}(a):=wau$ for all $a\in\mathcal{A}$. Therefore, all our results regarding the sufficient conditions for $T_{u,w}$ to be topologically transitive or chaotic can be generalized in this setting. Especially, with the above notations, one can give the following result.
\end{remark}
\begin{theorem}
	Let $w\in\mathcal{A}_1$ be invertible and $u$ be a unitary element of $\mathcal{A}_1$.  Suppose that there exist an element $a\in\mathcal{A}^+$ and  an $N\in\mathbb{N}$ such that $au^na=0$ for all $n\geq N$.  Then, ${\rm (i)\Rightarrow {\rm (ii)}}$.
	\begin{itemize}
		\item [{\rm (i)}] $\mathcal{P}((C_{u,w}^{(n)})_n)$ is dense in $\mathcal{A}$.		
		\item [{\rm (ii)}] $m(\varphi(w))<1$, where $(\varphi,\mathcal{H})$ is the universal representation of $\mathcal{A}_1$.
	\end{itemize}
\end{theorem}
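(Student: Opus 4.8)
The plan is to adapt the argument of the theorem preceding this one (the $C^*$-algebra generalization of the cosine-operator result whose Hilbert space prototype is the theorem proving $m(W)<1$ from density of $\mathcal{P}(C^{(n)}_{U,W})$). The key translation device is the universal representation $(\varphi,\mathcal{H})$ of $\mathcal{A}_1$: since $\varphi$ is a faithful $*$-representation it is isometric, so $\|\varphi(x)\| = \|x\|$ for $x\in\mathcal{A}_1$, and $m(\varphi(w)) = \|\varphi(w)^{-1}\|^{-1} = \|w^{-1}\|^{-1}$. Write $a\in\mathcal{A}^+$ with $au^na=0$ for $n\ge N$; the role played by $P_K$ in the Hilbert space proof will now be played by $a$ (or rather $\varphi(a)$), and the orthogonality $U^n(K)\perp K$ is replaced by the relation $au^na=0$, which in the representation says $\varphi(a)\varphi(u)^n\varphi(a)=0$ for $n\ge N$.

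First I would invoke density of $\mathcal{P}((C^{(n)}_{u,w})_n)$ in $\mathcal{A}$ applied to the element $a$: for each $k\in\mathbb{N}$ choose $F_k\in\mathcal{A}$ and a strictly increasing sequence $n_k\ge N$ with $\|F_k-a\|<4^{-k}$ and $C^{(n_k)}_{u,w}F_k=F_k$, i.e.\ $\tfrac12(w^{n_k}F_ku^{n_k}+w^{-n_k}F_ku^{-n_k})=F_k$. Multiplying $\|F_k+S^{2n_k}_{u,w}(F_k)-2a\|\le 2\|F_k-a\|$ on the right by $a$ and using $au^{n_k}=$ (something orthogonal to $a$) exactly as in the cited proof, one isolates $\|w^{n_k}[F_k+S^{2n_k}_{u,w}(F_k)]a\|\le 2\cdot 4^{-k}\cdot\|a\|$ (here the identity $2F_k = w^{n_k}F_ku^{n_k}+w^{-n_k}F_ku^{-n_k}$ combined with $au^{n_k}a$-type cancellations mimics the displayed chain of equalities in the preceding proof verbatim, with $P_K\rightsquigarrow a$, $P_{U^{-n_k}(K)}\rightsquigarrow$ an appropriate projection in the universal representation). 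Passing to the universal representation and using $\|a+S^{2n_k}_{u,w}(F_k)a - \text{error}\| $ is bounded below, one finds a unit vector $x_k$ with $\|\varphi(F_k+S^{2n_k}_{u,w}(F_k))x_k\|\ge 1 - 4^{-k}$ for $k$ large, whence
$$
\frac{2}{4^k}\ \ge\ m(\varphi(w)^{n_k})\,\bigl(1-4^{-k}\bigr)\ \ge\ m(\varphi(w))^{n_k}\,\bigl(1-4^{-k}\bigr),
$$
using $m(\varphi(w)^n)\ge m(\varphi(w))^n$. Dividing through and letting $k\to\infty$ forces $m(\varphi(w))<1$.

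The main obstacle is the passage from the abstract $C^*$-algebraic relation $au^na=0$ to a clean lower bound of the form "there is a unit vector $x_k$ on which $\varphi(F_k+S^{2n_k}_{u,w}(F_k))$ is almost isometric.'' In the Hilbert space version this is immediate because $P_K$ is a genuine orthogonal projection and $\|P_K G\|\ge 1-\varepsilon$ produces such a vector in $\mathrm{Im}(P_K)$. In the abstract setting $a$ is only a positive element of norm (say) $1$, so I would first reduce to the case $\|a\|=1$ and use that $\varphi(a)$, being a positive operator of norm $1$ on $\mathcal{H}$, has approximate eigenvectors for the eigenvalue $1$; the estimate $\|(F_k+S^{2n_k}_{u,w}(F_k))a - a\|<C\cdot 4^{-k}$ then yields, after applying $\varphi$ and testing against such an approximate eigenvector, the desired almost-isometry bound. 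The orthogonality bookkeeping (replacing the projections $P_{U^{-n_k}(K)}$, $U^{-n_k}P_KU^{n_k}$ in the model proof by elements supported on $u^{-n_k}\cdot(\text{support of }a)$) also needs care, but once one works inside $\varphi(\mathcal{A}_1)''$ on $\mathcal{H}$ these are handled exactly as in the Hilbert space computation, using $\varphi(a)\varphi(u)^{n_k}\varphi(a)=0$. The remaining steps are routine.
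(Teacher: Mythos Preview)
The paper states this theorem without proof, presenting it simply as the $C^*$-algebraic analogue of the earlier Hilbert space result. Your outline does follow that template, and the upper-bound chain
\[
\tfrac{2}{4^k}\ \ge\ 2\|F_k-a\|\ =\ \|T^{n_k}_{u,w}F_k+S^{n_k}_{u,w}F_k-2a\|\ \ge\ \cdots\ \ge\ \|w^{n_k}[F_k+S^{2n_k}_{u,w}(F_k)]\,a\|
\]
can indeed be pushed through in the universal representation using $au^{n}a=0$ in place of the projection orthogonality, much as you indicate.

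There is, however, a genuine gap in the \emph{lower} bound. You assert the estimate $\|(F_k+S^{2n_k}_{u,w}(F_k))a-a\|<C\cdot 4^{-k}$ and then extract from it an approximate unit vector on which $\varphi(F_k+S^{2n_k}_{u,w}(F_k))$ is nearly isometric. But nothing in the hypotheses yields that estimate: periodicity $C^{(n_k)}_{u,w}F_k=F_k$ together with $\|F_k-a\|<4^{-k}$ controls $F_k$ near $a$, not $F_k+S^{2n_k}_{u,w}(F_k)$ near $a$. In the Hilbert space prototype the paper obtains the analogous bound $\|F_k+S^{2n_k}_{U,W}(F_k)-P_K\|<4^{-k}$ only by invoking the \emph{extra} hypothesis that $S^n_{U,W}(F)\to 0$ for every $F$ (this is half of condition~(i) there), which lets one choose $n_k$ so large that $S^{2n_k}_{U,W}(F_k)$ is negligible. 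That hypothesis is absent from the $C^*$-algebra statement as written, and your sketch supplies no substitute for it. Without a lower bound on $\|[F_k+S^{2n_k}_{u,w}(F_k)]a\|$ (or on a suitable vector image), the inequality $\tfrac{2}{4^k}\ge m(\varphi(w))^{n_k}(1-4^{-k})$ does not follow, and the conclusion $m(\varphi(w))<1$ is unreached.

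In short: either the theorem needs the companion hypothesis $S^n_{u,w}(f)\to 0$ for all $f\in\mathcal A$ (mirroring the Hilbert space version), in which case your argument can be completed along the lines you describe, or a different mechanism is required to produce the lower bound---and your proposal does not supply one.
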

Moreover, if $\mathcal{A}$ is a unital $C^*$-algebra and $\ell_2(\mathcal{A})$ denotes the standard Hilbert module over $\mathcal{A}$, then all our results so far can be transferred directly to the case where $B_0(\mathcal{H})$ and $B(\mathcal{H})$ are replaced by $K(\ell_2(\mathcal{A}))$ and $B(\ell_2(\mathcal{A}))$, respectively. Here, $K(\ell_2(\mathcal{A}))$ and $B(\ell_2(\mathcal{A}))$ stand for the set of all compact and all bounded $\mathcal{A}$-linear operators on $\ell_2(\mathcal{A})$, respectively.
	
	
\section{Dynamics of the Adjoint Operator}
If we consider $T_{U,W}$ and $S_{U,W}$ as operators on $B_0(\mathcal{H})$, then their adjoints $T^*_{U,W}$ and $S^*_{U,W}$ are operators on $B_1(\mathcal{H})$ with the following formulas:
\begin{equation}
T^*_{U,W}(G)=UGW,\qquad S^*_{U,W}(G)=U^{-1}GW^{-1}\qquad(G\in B_1(\mathcal{H})).
\end{equation}

Indeed, if $\varphi\in B^*_0(\mathcal{H})$, then 
$$\varphi(F)=tr(GF)$$
for all $F\in B_0(\mathcal{H})$ and some $G\in B_1(\mathcal{H})$. Then,
\begin{align*}
T^*_{U,W}(\varphi)(F)&=\varphi(T_{U,W}(F))=\varphi(WFU)\\
&=tr(GWFU)=tr(UGWF)
\end{align*}
for all $F\in B_0(\mathcal{H})$.

Now, $UGW\in B_1(\mathcal{H})$ since $G\in B_1(\mathcal{H})$.


\begin{theorem}\label{thm51}
	Suppose that for every $m\in\mathbb{N}$ there exist sequences $(E_k)$ and $(R_k)$ of subspaces of $L_m$ and an increasing sequence $(n_k)\subseteq \mathbb{N}$ such that for each $k$, $L_m=E_k\oplus R_k$ and 
		\begin{equation}
	\lim_{k\rightarrow\infty}\left\|W^{n_k}\,P_{m}\right\|=\lim_{k\rightarrow\infty}\left\|W^{-n_k}\,P_{m}\right\|=0,
	\end{equation}
	\begin{equation}
	\lim_{k\rightarrow\infty}\left\|W^{2n_k}\,P_{E_k}\right\|=\lim_{k\rightarrow\infty}\left\|W^{-2n_k}\,P_{R_k}\right\|=0.
	\end{equation}
	Then, $\{C_{U,W}^{*(n)}\}$ is topologically transitive on $B_1(\mathcal{H})$.
\end{theorem}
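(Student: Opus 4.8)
The plan is to mimic, almost verbatim, the argument for the implication ${\rm (ii)}\Rightarrow{\rm (i)}$ in Theorem~\ref{thm41}, but now carried out in the trace-class space $B_1(\mathcal{H})$ rather than in $B_0(\mathcal{H})$, and using the explicit formulas $T^{*(n)}_{U,W}(G)=U^nGW^n$ and $S^{*(n)}_{U,W}(G)=U^{-n}GW^{-n}$ for the adjoint cosine family. First I would fix two non-empty open subsets $\mathcal{O}_1,\mathcal{O}_2$ of $B_1(\mathcal{H})$ and use the density of finite-rank operators $B_{00}(\mathcal{H})$ in $B_1(\mathcal{H})$ (in the trace norm) to pick finite-rank $F\in\mathcal{O}_1$ and $G\in\mathcal{O}_2$, together with an $\varepsilon>0$ for which the trace-norm $\varepsilon$-balls around $F$ and $G$ lie in $\mathcal{O}_1$ and $\mathcal{O}_2$ respectively. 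Choosing $m\in\mathbb{N}$ large enough that $FP_m=F$ and $GP_m=G$ (note that for the adjoint action $UGW$ the relevant truncation is on the right, since $F$ has finite-dimensional range and we will be composing with projections on that side), I would then invoke hypothesis~(ii) for that $m$ to get sequences $(E_k)$, $(R_k)$ of subspaces of $L_m$ with $L_m=E_k\oplus R_k$, and a strictly increasing $(n_k)$ satisfying the four stated limit relations.

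The key step is to write down the right analogue of the "mixing vector" $V_k$. In Theorem~\ref{thm41} one sets $V_k:=P_KF+2T^{n_k}_{U,W}(P_{E_k}G)+2S^{n_k}_{U,W}(P_{R_k}G)$; here I would set, for each $k$,
$$
V_k:=FP_m+2\,T^{*(n_k)}_{U,W}(GP_{E_k})+2\,S^{*(n_k)}_{U,W}(GP_{R_k}),
$$
and check the two limits $V_k\to F$ and $C^{*(n_k)}_{U,W}V_k\to G$ in trace norm. For the first limit one estimates, using $\|XY\|_1\le\|X\|\,\|Y\|_1$ and $\|XY\|_1\le\|X\|_1\,\|Y\|$ together with $\|U^{\pm n_k}\|=1$,
$$
\|T^{*(n_k)}_{U,W}(GP_{E_k})\|_1=\|U^{n_k}GP_{E_k}W^{n_k}\|_1\le\|G\|_1\,\|P_{E_k}W^{n_k}\|=\|G\|_1\,\|W^{n_k}P_{E_k}\|,
$$
and similarly $\|S^{*(n_k)}_{U,W}(GP_{R_k})\|_1\le\|G\|_1\,\|W^{-n_k}P_{R_k}\|$; by the hypotheses $\|W^{n_k}P_{E_k}\|\le\|W^{n_k}P_m\|\to 0$ and $\|W^{-n_k}P_{R_k}\|\le\|W^{-n_k}P_m\|\to 0$, so both correction terms vanish and $V_k\to F$. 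For the second limit, applying $C^{*(n_k)}_{U,W}=\tfrac12(T^{*(n_k)}_{U,W}+S^{*(n_k)}_{U,W})$ to $V_k$, the term $\tfrac12 T^{*(n_k)}_{U,W}(FP_m)$ and $\tfrac12 S^{*(n_k)}_{U,W}(FP_m)$ both tend to $0$ (by the hypotheses $\|W^{\pm n_k}P_m\|\to 0$, using $FP_m=F$); the cross terms $T^{*(n_k)}_{U,W}S^{*(n_k)}_{U,W}(GP_{E_k})=U^{-n_k}\cdot U^{n_k}GP_{E_k}W^{n_k}\cdot W^{-n_k}$ — wait, one must be careful about the composition order, so instead one observes $T^{*(n_k)}_{U,W}(T^{*(n_k)}_{U,W}(GP_{E_k}))=U^{2n_k}GP_{E_k}W^{2n_k}$, whose trace norm is $\le\|G\|_1\|W^{2n_k}P_{E_k}\|\to 0$, and likewise $S^{*(n_k)}_{U,W}(S^{*(n_k)}_{U,W}(GP_{R_k}))$ has trace norm $\le\|G\|_1\|W^{-2n_k}P_{R_k}\|\to 0$; while $S^{*(n_k)}_{U,W}(T^{*(n_k)}_{U,W}(GP_{E_k}))=U^{-n_k}U^{n_k}GP_{E_k}W^{n_k}W^{-n_k}=GP_{E_k}$ and $T^{*(n_k)}_{U,W}(S^{*(n_k)}_{U,W}(GP_{R_k}))=GP_{R_k}$. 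Hence
$$
C^{*(n_k)}_{U,W}V_k=GP_{E_k}+GP_{R_k}+o(1)=GP_m+o(1)=G+o(1)
$$
in trace norm, using $P_{E_k}+P_{R_k}=P_m$ (since $L_m=E_k\oplus R_k$) and $GP_m=G$.

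Combining the two limits, for $k$ large we get $V_k\in\mathcal{O}_1$ and $C^{*(n_k)}_{U,W}V_k\in\mathcal{O}_2$, so $C^{*(n_k)}_{U,W}(\mathcal{O}_1)\cap\mathcal{O}_2\ne\varnothing$, which is exactly topological transitivity of $\{C^{*(n)}_{U,W}\}$ on $B_1(\mathcal{H})$. I expect the main obstacle to be purely bookkeeping: getting the sides right throughout — because the adjoint acts by $G\mapsto U^nGW^n$, finite-rank truncation must be applied on the correct side, the inequalities $\|W^{n_k}P_{E_k}\|\le\|W^{n_k}P_m\|$ rely on $P_{E_k}=P_{E_k}P_m$ with $\|P_{E_k}\|\le 1$, and one must keep track of which of the four cross-terms collapse to $GP_{E_k}$ or $GP_{R_k}$ and which decay. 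There is also the minor subtlety that one should confirm $B_{00}(\mathcal{H})$ is dense in $B_1(\mathcal{H})$ in the trace norm (standard) and that all the operator-norm/trace-norm submultiplicativity inequalities used are the correct ones; none of this is deep, but it is where the proof can go wrong if written carelessly.
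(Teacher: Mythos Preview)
There are two gaps in your proposal, one minor and one fatal.

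The minor one: you cannot in general choose $m$ with $FP_m=F$ and $GP_m=G$ exactly. Having finite-dimensional \emph{range} gives $P_K F=F$ (left truncation); for $FP_m=F$ you would need $\mathrm{Im}(F^*)\subseteq L_m$, which is not automatic for finite-rank $F$. This is easily repaired by approximation ($FP_m\to F$ in $\|\cdot\|_1$, so pick $m$ with $FP_m\in\mathcal O_1$, $GP_m\in\mathcal O_2$ and work with those), so it is not the real obstacle.

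The real obstacle is the claimed equality $\|P_{E_k}W^{n_k}\|=\|W^{n_k}P_{E_k}\|$. Taking adjoints only gives $\|P_{E_k}W^{n_k}\|=\|(W^*)^{n_k}P_{E_k}\|$, which is unrelated to $\|W^{n_k}P_{E_k}\|$ unless $W$ is normal. (For a concrete failure, take $P=e_1\otimes e_1$ and $W$ a unilateral shift: one of $\|PW\|$, $\|WP\|$ vanishes while the other equals $1$.) Your right-truncation scheme $GP_{E_k}$ inevitably produces the factor $P_{E_k}W^{n_k}$ after applying $T_{U,W}^{*n_k}$, whereas the hypotheses of the theorem control only quantities of the form $\|W^{\pm n_k}P_m\|$ and $\|W^{\pm 2n_k}P_{E_k}\|$, $\|W^{\pm 2n_k}P_{R_k}\|$ --- the projection always sits on the \emph{right} of the $W$-power. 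So as written, your estimates never meet the assumptions.

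The paper's proof does not truncate on the right. It truncates on the \emph{left}: one works with $P_KG_1\in\mathcal O_1$ and with $P_{E_k}G_2$, $P_{R_k}G_2$, and sets
\[
F_k:=P_KG_1+2\,T_{U,W}^{*^{\,n_k}}(P_{E_k}G_2)+2\,S_{U,W}^{*^{\,n_k}}(P_{R_k}G_2).
\]
The relevant estimate is then for $\|T_{U,W}^{*^{\,n_k}}(P_KG_1)\|_1=\|P_KG_1W^{n_k}\|_1$, which the paper rewrites as $\|G_1W^{n_k}P_K\|_1$ and bounds by $\|G_1\|_1\,\|W^{n_k}P_K\|$; the other terms are handled the same way. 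So the side on which one places the projections is exactly the point where your argument diverges from the paper and breaks.
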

\begin{proof}
Let $\mathcal{O}_1$ and $\mathcal{O}_2$ be non-empty open subsets of 
$B_1(\mathcal{H})$. Since the set of finite rank operators is dense in $B_1(\mathcal{H})$, we may find $G_1\in \mathcal{O}_1$ and $G_2\in \mathcal{O}_2$
such that $G_1$ and $G_2$ are finite rank operators. 
Set $\widetilde{K}:={\rm Im}G_1+{\rm Im}G_2$, then $\widetilde{K}$ is finite dimensional. Then, $P_{\widetilde{K}}G_1=G_1$ and $P_{\widetilde{K}}G_2=G_2$. Since 
$\|P_m-P_{\widetilde{K}}\|\rightarrow 0$ as $m\rightarrow\infty$ and $$\|P_mG_i-G_i\|_1=\|(P_m-P_{\widetilde{K}})G_i\|_1\leq \|P_m-P_{\widetilde{K}}\|\,\|G_i\|_1\rightarrow 0,$$
as $m\rightarrow\infty$ for $i=1,2$, we deduce that there exists an $m_0\in\mathbb{N}$ such that $P_{m_0}G_1\in\mathcal{O}_1$ and $P_{m_0}G_2\in\mathcal{O}_2$. Set $K:=L_{m_0}$, and
 choose $\{E_k\}$, $\{R_k\}$ and $\{n_k\}$ satisfying the conditions of Theorem \ref{thm51}.
Then,
\begin{align*}
\|T_{U,W}^{*^{n_k}}(P_KG_1)\|_1&=\|U^{n_k}P_K G_1 W^{n_k}\|_1\\
&=\|P_K G_1 W^{n_k}\|_1\\
&=\|G_1 W^{n_k}P_K\|_1\\
&\leq \|G_1\|_1\,\|W^{n_k}P_K\|\rightarrow 0.
\end{align*}
Similarly, 
$$\|S_{U,W}^{*^{n_k}}(P_KG_1)\|_1,\,\|T_{U,W}^{*^{2n_k}}(P_{E_k}G_2)\|_1,\,\|S_{U,W}^{*^{2n_k}}(P_{R_k}G_2)\|_1\rightarrow 0$$
as $k\rightarrow\infty$. Moreover, by similar calculations we have 
\begin{align*}
\|T_{U,W}^{*^{n_k}}(P_{E_k}G_2)\|_1&\leq \|G_2\|_1\,\|W^{n_k}P_{E_k}\|\\
&=\|G_2\|_1\,\|W^{n_k}P_KP_{E_k}\|\\
&\leq \|G_2\|_1\,\|W^{n_k}P_K\|\rightarrow 0,
\end{align*}
and likewise, $\|S_{U,W}^{*^{n_k}}(P_{R_k}G_2)\|_1\rightarrow 0$.
Set, for each $k\in\mathbb{N}$,
$$F_k:=P_KG_1+2T_{U,W}^{*^{n_k}}(P_{E_k}G_2)+2S_{U,W}^{*^{n_k}}(P_{R_k}G_2)$$
and proceed as in the proof of Theorem \ref{thm41}.
\end{proof}
\begin{theorem}
	Suppose that $U,W\in B(\mathcal{H})$ such that $W$ is invertible and $U$ is unitary. Assume that there exists a finite dimensional subspace $K$ of $\mathcal{H}$ such that $U^n(K)\perp K$ for all $n\geq N$. Then, ${\rm (i)}\Rightarrow{\rm (ii)}$.
	\begin{itemize}
		\item [(i)] $\mathcal{P}(C^{(n)^*}_{U,W})$ is dense in $B_1(\mathcal{H})$, and for each $F\in B_1(\mathcal{H})$, $\lim_{n\rightarrow\infty}{S^*}^n_{U,W}(F)= 0$ in $B(\mathcal{H})$.
		\item [(ii)] $m(W)<1$.
	\end{itemize}
\end{theorem}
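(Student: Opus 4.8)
The plan is to follow the blueprint of the analogous statement for $\{C^{(n)}_{U,W}\}$ on $B_0(\mathcal H)$ proved above, the only structural change being that the adjoint acts by $G\mapsto UGW$, i.e.\ it multiplies by $U$ on the \emph{left} and by $W$ on the \emph{right} (the opposite sides from $T_{U,W}$), so that the compression by the projections $P_{U^{n_k}(K)}$ must be performed on the left. Since density of periodic elements is available in the trace norm $\|\cdot\|_1$ while the quantitative estimates are cleanest in the operator norm, I will move between the two via $\|\cdot\|\le\|\cdot\|_1$ together with $\|P_K\|=1$.

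First I would fix the test data. Using density of $\mathcal P(C^{(n)^*}_{U,W})$ in $B_1(\mathcal H)$, for each $k\in\mathbb N$ choose a periodic $F_k\in B_1(\mathcal H)$ with $\|F_k-P_K\|_1<4^{-k}$, and let $N_k'\in\mathbb N$ be such that $C^{(jN_k')^*}_{U,W}(F_k)=F_k$ for all $j$. Since, by hypothesis, $(S^*_{U,W})^n(F_k)\to 0$ in $B(\mathcal H)$ for this fixed $F_k$, I may then pick $n_k:=j_kN_k'$ with $j_k$ so large that $N\le n_1<n_2<\cdots$ and $\|(S^*_{U,W})^{2n_k}(F_k)\|<4^{-k}$. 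By construction $C^{(n_k)^*}_{U,W}(F_k)=F_k$, that is
$$U^{n_k}F_kW^{n_k}+U^{-n_k}F_kW^{-n_k}=2F_k,$$
and $\|F_k+(S^*_{U,W})^{2n_k}(F_k)-P_K\|\le\|F_k-P_K\|+\|(S^*_{U,W})^{2n_k}(F_k)\|<2\cdot4^{-k}$.

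The core of the proof is a left compression. Rewriting the identity above as $2(F_k-P_K)=U^{n_k}F_kW^{n_k}+U^{-n_k}F_kW^{-n_k}-2P_K$ and multiplying on the left by $P_{U^{n_k}(K)}$, the term $-2P_K$ is annihilated because $n_k\ge N$ forces $U^{n_k}(K)\perp K$, hence $P_{U^{n_k}(K)}P_K=0$; a short computation using $P_{U^{n_k}(K)}=U^{n_k}P_KU^{-n_k}$ and the unitarity of $U$ collapses the right-hand side to $U^{n_k}P_K\big(F_k+(S^*_{U,W})^{2n_k}(F_k)\big)W^{n_k}$. Passing to operator norms and using $\|P_{U^{n_k}(K)}\|=1$ gives
$$\big\|P_K\big(F_k+(S^*_{U,W})^{2n_k}(F_k)\big)W^{n_k}\big\|\le 2\|F_k-P_K\|<2\cdot4^{-k}.$$
For the matching lower bound, from $\|F_k+(S^*_{U,W})^{2n_k}(F_k)-P_K\|<2\cdot4^{-k}$ and $\|P_K\|=1$ one gets $\big\|P_K\big(F_k+(S^*_{U,W})^{2n_k}(F_k)\big)\big\|>1-2\cdot4^{-k}$, so there is a unit vector $x_k$ on which $P_K\big(F_k+(S^*_{U,W})^{2n_k}(F_k)\big)$ has norm $>1-2\cdot4^{-k}$; evaluating the operator in the previous display at the unit vector $W^{-n_k}x_k/\|W^{-n_k}x_k\|$ and using $\|W^{-n_k}\|\le\|W^{-1}\|^{n_k}=m(W)^{-n_k}$ yields
$$(1-2\cdot4^{-k})\,m(W)^{n_k}<2\cdot4^{-k}\qquad(k\in\mathbb N).$$
Since $n_k\to\infty$ and the right-hand side tends to $0$, the inequality is impossible if $m(W)\ge 1$; hence $m(W)<1$, which is (ii).

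The only genuine difficulty is the bookkeeping in the compression step: one must arrange the multiplications so that $P_{U^{n_k}(K)}$ kills the $-2P_K$ term (this is precisely where the orthogonality hypothesis on $U$ enters) and so that the two surviving terms reassemble exactly into $F_k+(S^*_{U,W})^{2n_k}(F_k)$; everything else is a routine adaptation of the $B_0(\mathcal H)$ argument, with the hypothesis ``$(S^*_{U,W})^n(F)\to 0$ in $B(\mathcal H)$'' used solely to keep $\big\|P_K\big(F_k+(S^*_{U,W})^{2n_k}(F_k)\big)\big\|$ bounded below by a quantity close to $1$.
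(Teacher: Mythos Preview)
Your argument is correct and follows the same overall architecture as the paper's proof: approximate $P_K$ by periodic elements $F_k$, exploit the periodicity relation $C^{(n_k)^*}_{U,W}(F_k)=F_k$, compress from the left by $P_{U^{n_k}(K)}$ (using the orthogonality hypothesis to kill the $P_K$ term), and combine the resulting upper bound with a lower bound coming from $\|P_K(F_k+(S^*_{U,W})^{2n_k}(F_k))\|$ being close to $1$.

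There are two points where your write-up differs from the paper. First, you are more explicit about \emph{why} the second inequality $\|F_k+(S^*_{U,W})^{2n_k}(F_k)-P_K\|<2\cdot 4^{-k}$ can be arranged: you use the hypothesis $(S^*_{U,W})^n(F)\to 0$ to make $n_k$ large enough along the period of $F_k$, whereas the paper simply postulates such a sequence without spelling out the mechanism. Second, at the final comparison step the paper invokes the trace duality $B_0(\mathcal H)^*\cong B_1(\mathcal H)$, choosing $D_k\in B_0(\mathcal H)$ that almost norms $P_K(F_k+(S^*_{U,W})^{2n_k}(F_k))$ and then working with $\widetilde D_k=W^{-n_k}D_k$; you instead pass entirely to the operator norm via $\|\cdot\|\le\|\cdot\|_1$ and use a single unit vector $x_k$ together with $\|W^{-n_k}x_k\|\le m(W)^{-n_k}$. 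Your route is slightly more elementary (no trace pairing needed) and yields the same inequality $(1-2\cdot 4^{-k})\,m(W)^{n_k}<2\cdot 4^{-k}$; the paper's duality argument is the natural one if one wishes to keep the entire proof inside $\|\cdot\|_1$.
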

\begin{proof}
Choose a sequence $\{G_k\}$ in $B_1(\mathcal{H})$ such that for each $k\in\mathbb{N}$,
$$\|G_k-P_K\|<\frac{1}{4^k},\quad \|G_k+S_{U,W}^{*^{2n_k}}G_k-P_K\|_1<\frac{1}{4^k}$$
and 
$$C_{U,W}^{*(n_k)}G_k=G_k$$
where $\{n_k\}$ is an increasing sequence in $\mathbb{N}$. We then get 
$$\|P_K(G_k+S_{U,W}^{*^{2n_k}}G_k)-P_K\|_1<\frac{1}{4^k},$$
so 
$$\|P_K(G_k+S_{U,W}^{*^{2n_k}}G_k)\|_1>(1-\frac{1}{4^k}).$$
Thus, for each $k$, there is a $D_k\in B_0(\mathcal{H})$ such that 
$$|tr(P_K(G_k+S_{U,W}^{*^{2n_k}}G_k)D_k|\geq (1-\frac{1}{4^k})\,\|D_k\|$$
and $D_k\neq 0$. Next, 
\begin{align*}
\frac{2}{4^k}&\geq 2\|G_k-P_K\|_1\\
&=\|T_{U,W}^{*^{n_k}}G_k+S_{U,W}^{*^{n_k}}G_k-2P_K\|_1\\
&=\|U^{n_k}G_k W^{n_k}+U^{-n_k}G_k W^{-n_k}-2P_K\|_1\\
&\geq \|P_{U^{n_k}(K)}(U^{n_k}G_k W^{n_k}+U^{-n_k}G_k W^{-n_k}-2P_K)\|_1\\
&=\|P_{U^{n_k}(K)}(U^{n_k}G_k W^{n_k}+U^{-n_k}G_k W^{-n_k})\|_1\\
&=\|U^{n_k}P_K U^{-n_k}(U^{n_k}G_k W^{n_k}+U^{-n_k}G_k W^{-n_k})\|_1\\
&=\|P_K U^{-n_k}(U^{n_k}G_k W^{n_k}+U^{-n_k}G_k W^{-n_k})\|_1\\
&=\|P_K (G_k+U^{-2n_k}G_k W^{-2n_k})W^{n_k}\|_1.
\end{align*}
Hence, 
\begin{align*}
\frac{2}{4^k}\,\|\widetilde{D_k}\|&\geq \|P_K (G_k+S_{U,W}^{*^{2n_k}}G_k)W^{n_k}\|_1\,\|\widetilde{D_k}\|\\
&\geq |tr(P_K (G_k+S_{U,W}^{*^{2n_k}}G_k)W^{n_k}\widetilde{D_k})|,
\end{align*}
where $\widetilde{D_k}:=W^{-n_k} D_k$. Thus, we get 
\begin{align*}
\frac{2}{4^k}\,\|\widetilde{D_k}\|&\geq |tr(P_K (G_k+S_{U,W}^{*^{2n_k}}G_k))D_k|\\
&\geq (1-\frac{1}{4^k})\,\|D_k\|=(1-\frac{1}{4^k})\,\|W^{n_k}\widetilde{D_k}\|.
\end{align*}
Next, observe that for all $x\in\mathcal{H}$ we have 
$$\|W^{n_k}\widetilde{D_k}x\|\geq (m(W))^{n_k}\,\|\widetilde{D_k} x\|.$$
Taking the supremum  over the unite sphere on the both sides of the inequality we obtain 
$$\|W^{n_k}\widetilde{D_k}\|\geq m(W)^{n_k}\,\|\widetilde{D_k}\|.$$
Hence 
$$\frac{2}{4^k}\,\|\widetilde{D_k}\|\geq (1-\frac{1}{4^k})\,m(W)^{n_k}\,\|\widetilde{D_k}\|.$$
Since $\widetilde{D_k}\neq 0$, we may divide the both sides of the inequality by $\|\widetilde{D_k}\|$, and obtain 
$$\frac{2}{4^k}\geq (1-\frac{1}{4^k})\,m(W)^{n_k}$$
which gives that 
$m(W)<1$.
\end{proof}
For each $D\in B(\mathcal{H})$, we define $\widetilde{D}\in B(B(\mathcal{H}))$ by
$\widetilde{D}(F):=FD$ for all $F\in B(\mathcal{H})$. If $\varphi\in B(\mathcal{H})'$, then we let 
$M_D\varphi\in B(\mathcal{H})'$ be given by 
$M_D\varphi(F):=\varphi(DF)$ for all $F\in B(\mathcal{H})$. If we consider now $T_{U,W}$ as an operator on $B(\mathcal{H})$, we have then that 
$$T_{U,W}^*(\varphi)=\varphi_W\circ\widetilde{U}$$
and 
$$S_{U,W}^*(\varphi)=\varphi_{W^{-1}}\circ\widetilde{U}^{-1}.$$
\begin{theorem}
	Let $U,W\in B(\mathcal{H})$ be invertible such that $U$ is unitary. Suppose that there exists a finite dimensional subspace $K$ of $\mathcal{H}$ and $N\in\mathbb{N}$ such that $U^n(K)\perp K$ for all $n\geq N$. Then, ${\rm (i)}\Rightarrow{\rm (ii)}$:
	\begin{itemize}
		\item [(i)] $\mathcal{P}\{(C^*_{U,W})^n\}$ is dense in $B(\mathcal{H})'$ and 
		$\lim_{n\rightarrow\infty} (S^*_{U,W})^n\varphi =0$ for all $\varphi\in B(\mathcal{H})'$.
		\item [(ii)] $m(W)<1$.
	\end{itemize}
\end{theorem}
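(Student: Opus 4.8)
The plan is to repeat, in the larger space $B(\mathcal H)'$, the argument used for the preceding theorem on $B_1(\mathcal H)$. The functional taking over the role of the trace-class projection $P_K$ is $\psi_K\in B(\mathcal H)'$, $\psi_K(F):=\operatorname{tr}(P_KF)$; it is bounded since $P_K$ is trace class, $\|\psi_K\|=\|P_K\|_1=\dim K=:d$, and by cyclicity of the trace it ``lives on $K$'': $\psi_K(FP_K)=\psi_K(F)$ and $\psi_K(FB)=0$ whenever $BP_K=0$. I will use the adjoint formulas $(T^{n}_{U,W})^{*}\varphi(F)=\varphi(W^{n}FU^{n})$ and $(S^{n}_{U,W})^{*}\varphi(F)=\varphi(W^{-n}FU^{-n})$ on $B(\mathcal H)'$, together with the elementary fact that precomposing a functional on $B(\mathcal H)$ with one-sided multiplication by an operator $A$ changes its norm by a factor of at most $\|A\|$ — so the norm is not increased when $A$ is a norm-one projection, and is unchanged when $A$ is unitary.

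First I would use (i). By density of the periodic functionals, for each $k$ choose a periodic $\varphi_k$ with $\|\varphi_k-\psi_K\|<4^{-k}$; then, exactly as in the preceding proof, periodicity together with $(S^{n}_{U,W})^{*}(T^{n}_{U,W})^{*}=I$ and the decay hypothesis lets one fix a strictly increasing $(n_k)\subseteq\mathbb{N}$ with $n_1\ge N$ such that
$$(T^{n_k}_{U,W})^{*}\varphi_k+(S^{n_k}_{U,W})^{*}\varphi_k=2\varphi_k,\qquad \|\Lambda_k-\psi_K\|<4^{-k},\quad \Lambda_k:=\varphi_k+(S^{2n_k}_{U,W})^{*}\varphi_k.$$
In particular $\|(T^{n_k}_{U,W})^{*}\varphi_k+(S^{n_k}_{U,W})^{*}\varphi_k-2\psi_K\|=2\|\varphi_k-\psi_K\|<2\cdot4^{-k}$.

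Then comes the core estimate, which follows the displayed chain of the preceding proof line by line. Precompose $(T^{n_k}_{U,W})^{*}\varphi_k+(S^{n_k}_{U,W})^{*}\varphi_k-2\psi_K$ with right multiplication by the norm-one projection $P_{U^{n_k}(K)}=U^{n_k}P_KU^{-n_k}$; since $n_k\ge N$, orthogonality gives $P_{U^{n_k}(K)}P_K=0$, which annihilates the $\psi_K$-term, and after simplifying the factors $P_{U^{n_k}(K)}U^{\pm n_k}$ and discarding the resulting unitary by the norm-preserving change of variable $F\mapsto FU^{n_k}$, one obtains that the functional $\Omega_k(F):=\Lambda_k(W^{n_k}FP_K)$ satisfies $\|\Omega_k\|<2\cdot4^{-k}$. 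On the other hand $\|F\mapsto\Lambda_k(FP_K)\|>d-4^{-k}$, since $F\mapsto\Lambda_k(FP_K)$ is within $4^{-k}$ of $\psi_K$ (recall $\psi_K(FP_K)=\psi_K(F)$); so choose $D_k\in B(\mathcal H)$ with $\|D_k\|\le1$ and $|\Lambda_k(D_kP_K)|>d-2\cdot4^{-k}$. Putting $\widetilde{D_k}:=W^{-n_k}D_k$, so that $W^{n_k}\widetilde{D_k}=D_k$ and hence $\Omega_k(\widetilde{D_k})=\Lambda_k(D_kP_K)$, we obtain
$$d-2\cdot4^{-k}<|\Omega_k(\widetilde{D_k})|\le\|\Omega_k\|\,\|\widetilde{D_k}\|<2\cdot4^{-k}\,\|W^{-n_k}\|,$$
whence $m(W^{n_k})=\|W^{-n_k}\|^{-1}<\frac{2\cdot4^{-k}}{d-2\cdot4^{-k}}\longrightarrow0$ (for $k$ large enough that $d>2\cdot4^{-k}$). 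As $m(W^{n_k})\ge m(W)^{n_k}$, this is impossible unless $m(W)<1$, i.e. (ii) holds.

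The step I expect to require the most care is the transfer of the ``multiply the trace-class element on one side by $P_{U^{n_k}(K)}$'' manipulation to $B(\mathcal H)'$, whose elements need not be trace class: one must recast it as composition of functionals with the one-sided multiplication operators of $B(\mathcal H)$, checking at each stage which operation preserves the norm and that $\psi_K$ behaves under these compositions exactly as $P_K$ behaves under the trace. The remaining ingredients — the approximately norming $D_k\in B(\mathcal H)$, which costs nothing since $B(\mathcal H)'$ is the full dual of $B(\mathcal H)$, and the closing inequalities involving $\widetilde{D_k}=W^{-n_k}D_k$ and $m(W^{n_k})\ge m(W)^{n_k}$ — carry over verbatim from the preceding theorem.
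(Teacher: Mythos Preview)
Your argument is correct and follows the paper's proof essentially line for line: approximate an anchor functional by a periodic $\varphi_k$, form $\Lambda_k=\varphi_k+(S^{2n_k}_{U,W})^{*}\varphi_k$ via the decay hypothesis, compose the cosine identity on the right with $P_{U^{n_k}(K)}$ to extract the small-norm functional $\Omega_k(F)=\Lambda_k(W^{n_k}FP_K)$, and conclude through a norming element $\widetilde{D_k}=W^{-n_k}D_k$ together with $m(W^{n_k})\ge m(W)^{n_k}$. The only difference is cosmetic---you take the concrete trace functional $\psi_K=\operatorname{tr}(P_K\,\cdot\,)$ of norm $\dim K$, whereas the paper uses a norm-one Hahn--Banach functional $\widetilde{\phi_K}:=\phi_K\circ\widetilde{P_K}$ with $\phi_K(P_K)=1$, $\|\phi_K\|=1$; your cyclicity identities for $\psi_K$ play exactly the role of the paper's $\widetilde{\phi_K}\circ\widetilde{P_K}=\widetilde{\phi_K}$ and $\widetilde{\phi_K}\circ\widetilde{P_{U^{n_k}(K)}}=0$.
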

\begin{proof}
there exists $\phi_K\in B(\mathcal{H})'$ such that $\phi_K(P_K)=1$ and $\|\phi_K\|=1$. Let $\widetilde{\phi_K}:=\phi_K\circ \widetilde{P_K}$.
	Choose a sequence $\{\varphi_k\}$ in the dual of $B(\mathcal{H})$ and an increasing sequence $\{n_k\}$ of positive integers s.t. 
	$$\|\varphi_k-\widetilde{\phi_K}\|<\frac{1}{4^{k+1}},\qquad\|\varphi_k+S_{U,W}^{*^{2n_k}}\varphi_k-\widetilde{\phi_K}\|<\frac{1}{4^k},$$
	and 
	$$C_{U,W}^{(n_k)^*}\varphi_k=\varphi_k$$
	for all $k\in\mathbb{N}$. 	
	Now, we obtain 
	\begin{equation*}
	\|\varphi_k\circ \widetilde{U}^n+S_{U,W}^{*^{2n_k}}\varphi_k\circ\widetilde{U}- \widetilde{\phi_K}\circ \widetilde{U}^n\|<\frac{1}{4^k},
	\end{equation*}
	which gives 
	$$\|\varphi_k\circ \widetilde{U}^n+S_{U,W}^{*^{2n_k}}\varphi_k\circ\widetilde{U}^n\|>1-\frac{1}{4^k},$$
	for all $n\in\mathbb{N}$, as $\|\widetilde{\phi_K}\circ \widetilde{U}^n\|=\|- \widetilde{\phi_K}\|=1$. Next,
	\begin{align*}
	\frac{2}{4^k}&\geq  2\|\varphi_k-\widetilde{\phi_K}\|\\
	&=\|T_{U,W}^{*^{n_k}}\varphi_k+S_{U,W}^{*^{n_k}}\varphi_k-2\widetilde{\phi_K}\|\\
	&=\|M_{W^{n_k}}\varphi_k\circ \widetilde{U}^{n_k}+M_{W^{-n_k}}\varphi_k\circ \widetilde{U}^{-n_k}-2\widetilde{\phi_K}\|.
	\end{align*}
	Next, since 
	$$\|\varphi_k+S_{U,W}^{*^{2n_k}}\varphi_k-\widetilde{\phi_K}\|<\frac{1}{4^k},$$
	we get 
	$$\|\varphi_k\circ \widetilde{P_K}+S_{U,W}^{*^{2n_k}}\varphi_k\circ \widetilde{P_K}-\widetilde{\phi_K}\circ \widetilde{P_K}\|<\frac{1}{4^k}.$$
	Since $\widetilde{\phi_K}\circ \widetilde{P_K}=\widetilde{\phi_K}$ and $\|\widetilde{\phi_K}\|=1$,  we get 
	$$\|\varphi_k\circ \widetilde{P_K}+S_{U,W}^{*^{2n_k}}\varphi_k\circ \widetilde{P_K}\|>1-\frac{1}{4^k}.$$
	Then, for each $k$, there exists an $F_k\in B(\mathcal{H})$ such that $\|F_k\|\neq 0$ and 
	$$\|(\varphi_k\circ \widetilde{P_K}+S_{U,W}^{*^{2n_k}}\varphi_k\circ \widetilde{P_K})F_k\|>\left(1-\frac{1}{4^k}\right)\|F_k\|.$$
	So, we get 
	\begin{align*}
	\frac{2}{4^k}&>\|\Big(M_{W^{n_k}}\varphi_k\circ \widetilde{U}^{n_k}+M_{W^{-n_k}}\varphi_k\circ \widetilde{U}^{-n_k}-2\widetilde{\phi_K}\Big)\circ \widetilde{P_{U^{n_k}(K)}}\|\\
	&=\|\Big(M_{W^{n_k}}\varphi_k\circ \widetilde{U}^{n_k}+M_{W^{-n_k}}\varphi_k\circ \widetilde{U}^{-n_k}\Big)\circ \widetilde{P_{U^{n_k}(K)}}\|\\
	&=\|\Big(M_{W^{n_k}}\varphi_k\circ \widetilde{U}^{n_k}+M_{W^{-n_k}}\varphi_k\circ \widetilde{U}^{-n_k}\Big)\circ\widehat{U}^{-n_k} \widetilde{P_K}\circ\widehat{U}^{n_k}\|\\
	&=\|\Big(M_{W^{n_k}}\varphi_k\circ \widetilde{U}^{n_k}+M_{W^{-n_k}}\varphi_k\circ \widetilde{U}^{-n_k}\Big)\circ\widehat{U}^{-n_k} \widetilde{P_K}\|\\
	&=\|\Big(M_{W^{n_k}}\varphi_k+M_{W^{-n_k}}\varphi_k\circ \widetilde{U}^{-2n_k}\Big)\circ\widetilde{P_K}\|\\
	&=\|M_{W^{n_k}}\Big(\varphi_k+M_{W^{-2n_k}}\varphi_k\circ \widetilde{U}^{-2n_k}\Big)\circ\widetilde{P_K}\|\\
	&=\|M_{W^{n_k}}\Big([\varphi_k+M_{W^{-2n_k}}\varphi_k\circ \widetilde{U}^{-2n_k}]\circ P_K\Big)\|\\
	&=\|M_{W^{n_k}}\Big([\varphi_k+S_{U,W}^{*^{2n_k}}\varphi_k]\circ \widetilde{P_K}\Big)\|.
	\end{align*}
	Hence, 
	\begin{align*}
	\frac{2}{4^k}\,\|W^{-n_k}F_k\|&\geq \|M_{W^{n_k}}\Big([\varphi_k+S_{U,W}^{*^{2n_k}}\varphi_k]\circ \widetilde{P_K}\Big)\|\,\|W^{-n_k}F_k\|\\
	&\geq \|M_{W^{n_k}}\Big([\varphi_k+S_{U,W}^{*^{2n_k}}\varphi_k]\circ \widetilde{P_K}\Big)\Big(W^{-n_k}F_k\Big)\|\\
	&=\|(\varphi_k+S_{U,W}^{*^{2n_k}}\varphi_k)\circ \widetilde{P_K})\Big(F_k\Big)\|\\
	&\geq (1-\frac{1}{4^k})\|F_k\|\\
	&=(1-\frac{1}{4^k})\|W^{n_k}(W^{-n_k}F_k)\|.	
	\end{align*}
	Now, observe that for all $x\in\mathcal{H}$ we have 
	$$\|W^{n_k}(W^{-n_k}F_k)x\|\geq (m(W))^{n_k}\|(W^{-n_k}F_k)x\|.$$
	Taking supremum over the unit ball in $\mathcal{H}$ on the both sides of the inequality, we obtain 
	$$\|F_k\|\geq (m(W))^{n_k} \|(W^{-n_k}F_k)\|.$$
	It follows that $m(W)<1$.
\end{proof}
Consider $B(\mathcal{H})$ as a topological vector space equipped with the strong topology. Let $B(\mathcal{H})'$ be equipped with $w^*$-topology.
\begin{theorem}\label{thm54}
We have ${\rm (ii)}\Rightarrow{\rm (i)}$: 
\begin{itemize}
	\item [(i)] $(C_{U,W}^{(n)*})$ is topologically transitive in $B(\mathcal{H})'$.
	\item [(ii)] For every $m\in\mathbb{N}$ there exist sequences $(E_k)$ and $(R_k)$ of subspaces of $L_m$ and an increasing sequence $(n_k)\subseteq \mathbb{N}$ such that for each $k$, $L_m=E_k\oplus R_k$ and 
	\begin{equation}
	\lim_{k\rightarrow\infty}\left\|P_{m}\,W^{n_k}\right\|=\lim_{k\rightarrow\infty}\left\|P_{m}\,W^{-n_k}\right\|=0,
	\end{equation}
	\begin{equation}
	\lim_{k\rightarrow\infty}\left\|P_{E_k}\,W^{2n_k}\right\|=\lim_{k\rightarrow\infty}\left\|P_{R_k}\,W^{-2n_k}\right\|=0.
	\end{equation}
\end{itemize}
\end{theorem}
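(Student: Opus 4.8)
The plan is to run the argument of Theorem \ref{thm41} on the dual side, interchanging left and right multiplication, since taking adjoints transposes the operators. First record the basic formulas: with $\widetilde D(F)=FD$ and $M_D\varphi(F)=\varphi(DF)$ as above, $(T^*_{U,W})^{n}(\varphi)=M_{W^{n}}\varphi\circ\widetilde U^{\,n}$ and $(S^*_{U,W})^{n}(\varphi)=M_{W^{-n}}\varphi\circ\widetilde U^{\,-n}$, so $C_{U,W}^{(n)*}=\tfrac12\big((T^*_{U,W})^{n}+(S^*_{U,W})^{n}\big)$; moreover $T_{U,W}$ and $S_{U,W}$ are mutually inverse, hence so are $T^*_{U,W}$ and $S^*_{U,W}$, giving $(T^*_{U,W})^{n}(S^*_{U,W})^{n}=(S^*_{U,W})^{n}(T^*_{U,W})^{n}=\mathrm{id}$. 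The single estimate we shall use is: for any orthogonal projection $P_S$ onto a subspace $S\subseteq L_m$, any $\varphi\in B(\mathcal H)'$ and $F\in B(\mathcal H)$,
\[
\big|(T^*_{U,W})^{n}(M_{P_S}\varphi)(F)\big|=|\varphi(P_SW^{n}FU^{n})|\le\|\varphi\|\,\|P_SW^{n}\|\,\|F\|,
\]
and likewise with $(T^*_{U,W})^{n},W^{n}$ replaced by $(S^*_{U,W})^{n},W^{-n}$. Consequently, whenever $\|P_SW^{\pm n_k}\|\to0$, the functionals $(T^*_{U,W})^{n_k}(M_{P_S}\varphi)$ (resp. the $S^*$-version) converge to $0$ in the $w^{*}$-topology, because $w^{*}$-convergence is exactly pointwise convergence of the evaluations at each $F\in B(\mathcal H)$.

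Now fix non-empty $w^{*}$-open sets $\mathcal O_1,\mathcal O_2\subseteq B(\mathcal H)'$. Since the finite sums of vector functionals $F\mapsto\langle Fx,y\rangle$ separate the points of $B(\mathcal H)$, they form a $w^{*}$-dense subspace of $B(\mathcal H)'$, so we may choose such functionals $\varphi_1\in\mathcal O_1$ and $\varphi_2\in\mathcal O_2$. For such $\varphi_i$ one has $M_{P_m}\varphi_i(F)=\varphi_i(P_mF)\to\varphi_i(F)$ for every $F$, since $P_mF\to F$ in the strong operator topology; thus $M_{P_m}\varphi_i\to\varphi_i$ in the $w^{*}$-topology, and we fix $m$ with $M_{P_m}\varphi_i\in\mathcal O_i$ ($i=1,2$) and put $K:=L_m$, so $P_K=P_m$. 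With $(E_k),(R_k),(n_k)$ as provided by (ii) for this $m$, set
\[
\Psi_k:=M_{P_K}\varphi_1+2(T^*_{U,W})^{n_k}(M_{P_{E_k}}\varphi_2)+2(S^*_{U,W})^{n_k}(M_{P_{R_k}}\varphi_2).
\]
Since $P_{E_k}=P_{E_k}P_m$ and $P_{R_k}=P_{R_k}P_m$, we have $\|P_{E_k}W^{n_k}\|\le\|P_mW^{n_k}\|\to0$ and $\|P_{R_k}W^{-n_k}\|\le\|P_mW^{-n_k}\|\to0$, so by the estimate the two correction terms vanish in $w^{*}$ and $\Psi_k\to M_{P_K}\varphi_1$. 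Expanding $C_{U,W}^{(n_k)*}\Psi_k$ and using $(T^*_{U,W})^{n_k}(S^*_{U,W})^{n_k}=(S^*_{U,W})^{n_k}(T^*_{U,W})^{n_k}=\mathrm{id}$, all the resulting terms tend to $0$ in $w^{*}$ — by the estimate together with $\|P_KW^{\pm n_k}\|\to0$, $\|P_{E_k}W^{2n_k}\|\to0$ and $\|P_{R_k}W^{-2n_k}\|\to0$ from (ii) — except $M_{P_{E_k}}\varphi_2+M_{P_{R_k}}\varphi_2$, which equals $M_{P_{E_k}+P_{R_k}}\varphi_2=M_{P_K}\varphi_2$ because $L_m=E_k\oplus R_k$ orthogonally. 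Hence $C_{U,W}^{(n_k)*}\Psi_k\to M_{P_K}\varphi_2$ in $w^{*}$. As $M_{P_K}\varphi_1\in\mathcal O_1$ and $M_{P_K}\varphi_2\in\mathcal O_2$, for all sufficiently large $k$ we get $\Psi_k\in\mathcal O_1$ and $C_{U,W}^{(n_k)*}\Psi_k\in\mathcal O_2$, i.e. $C_{U,W}^{(n_k)*}(\mathcal O_1)\cap\mathcal O_2\neq\varnothing$, establishing (i).

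All the genuine content is that of Theorem \ref{thm41}; the only points needing care are topological rather than algebraic. One must keep track of the $w^{*}$-topology — verifying that $w^{*}$-convergence of $\Psi_k$ and of its image may be read off from the pointwise-in-$F$ bounds above, and that inserting the left projection, that is, passing from $\varphi_i$ to $M_{P_m}\varphi_i$, truly $w^{*}$-approximates $\varphi_i$ — both of which come down to the elementary facts that $P_mF\to F$ strongly and that the vector functionals separate points. Otherwise this is exactly the computation in the proof of Theorem \ref{thm41}, transcribed to the dual side with ``multiplying $W^{\pm n}$ on the right of $F$'' replaced throughout by ``multiplying $P_SW^{\pm n}$ on the left inside $\varphi$''.
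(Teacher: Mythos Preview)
Your proof is correct and follows essentially the same route as the paper's: pick $\varphi_i\in\mathcal O_i$, replace them by $M_{P_m}\varphi_i$ using $P_mF\to F$ strongly, then set $\Psi_k=M_{P_K}\varphi_1+2(T^*)^{n_k}(M_{P_{E_k}}\varphi_2)+2(S^*)^{n_k}(M_{P_{R_k}}\varphi_2)$ and verify the limits via the pointwise bound $|\varphi(P_SW^{\pm n}FU^{\pm n})|\le\|\varphi\|\,\|P_SW^{\pm n}\|\,\|F\|$. The only notable difference is your detour through ``finite sums of vector functionals are $w^*$-dense because they separate points'': in the paper's setup $B(\mathcal H)'$ denotes the dual of $B(\mathcal H)$ with the \emph{strong} operator topology, and it is a standard fact that every SOT-continuous functional on $B(\mathcal H)$ is already a finite sum $\sum\langle\,\cdot\,x_i,y_i\rangle$, so your density step is vacuously true rather than an additional reduction; the paper accordingly works directly with arbitrary $\varphi_i\in B(\mathcal H)'$ and invokes their SOT-continuity in place of your norm bound.
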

\begin{proof}
Let $\mathcal{O}_1$ and $\mathcal{O}_2$ be two non-empty open subsets of $B(\mathcal{H})'$ in the $w^*$-topology, choose some $\varphi_1\in\mathcal{O}_1$ and $\varphi_2\in\mathcal{O}_2$.  Then, for each $F\in B(\mathcal{H})$ we have $P_n F\rightarrow F$ strongly as $n\rightarrow\infty$. Hence, $\varphi_1(P_nF)\rightarrow \varphi_1(F)$ and $\varphi_2(P_n F)\rightarrow \varphi_2(F)$ for all $F\in B(\mathcal{H})$. It follows that $M_{P_n}\varphi_1\rightarrow \varphi_1$ and $M_{P_n}\varphi_2\rightarrow \varphi_2$ in the $w^*$-topology a $n\rightarrow\infty$. Therefore, there exists an $n_0\in\mathbb{N}$ such that $M_{P_{n_0}}\varphi_1\in\mathcal{O}_1$ and $M_{P_{n_0}}\varphi_2\in\mathcal{O}_2$. Set $K:=L_{n_0}$.
For every $F\in B(\mathcal{H})$ we have 
$$|T_{U,W}^{*^{2n_k}}M_{P_{E_k}}\varphi_2)(F)|=|\varphi_2(P_{E_k}W^{2n_k} FU^{2n_k})|.$$
Since 
$$\|P_{E_k}W^{2n_k} FU^{2n_k}\|\leq \|P_{E_k}W^{2n_k}\|\,\|F\|,$$
we have 
$$\varphi_2(P_{E_k}W^{2n_k} FU^{2n_k})\rightarrow 0,$$
as $k\rightarrow\infty$, because $\varphi_2$ is continuous in the strong topology. As this holds for all $F\in B(\mathcal{H})$, we deduce that 
$T_{U,W}^{*^{2n_k}}(M_{P_{E_k}}\varphi_2)\rightarrow 0$ in the $w^*$-topology as $k\rightarrow\infty$.

 Similarly, $S_{U,W}^{*^{2n_k}}(M_{P_{R_k}}\varphi_2)\rightarrow 0$ in the $w^*$-topology as $k\rightarrow\infty$. 
 
 Moreover, 
$$T_{U,W}^{*^{n_k}}(M_{P_{E_k}}\varphi_1)\rightarrow 0,$$
$$S_{U,W}^{*^{n_k}}(M_{P_{R_k}}\varphi_1)\rightarrow 0,$$
as $k\rightarrow\infty$, in the $w^*$-topology, since 
$$\|P_{E_k}W^{n_k}\|=\|P_{E_k}P_K W^{n_k}\|\leq \|P_K W^{n_k}\|$$
and 
$$\|P_{R_k}W^{n_k}\|=\|P_{R_k}P_K W^{n_k}\|\leq \|P_K W^{n_k}\|.$$
It is not hard to see that 
also 
$$T_{U,W}^{*^{n_k}}(M_{P_{K}}\varphi_2)\rightarrow 0,$$
$$S_{U,W}^{*^{n_k}}(M_{P_{K}}\varphi_2)\rightarrow 0,$$
as $k\rightarrow\infty$, in the $w^*$-topology. 

Then, for each $k\in\mathbb{N}$ put 
$$\psi_k:=M_{P_K}\varphi_1+2T_{U,W}^{*^{n_k}}(M_{P_{E_k}}\varphi_2)+2S_{U,W}^{*^{n_k}}(M_{P_{R_k}}\varphi_2),$$
and proceed as in the proof of Theorem \ref{thm41} and Theorem \ref{thm51}.
\end{proof}
\begin{theorem}
	We have 
	${\rm (i)}\Rightarrow {\rm (ii)}$:
	\begin{itemize}
		\item [(i)] $P(T_{U,W}^{*^n})$ is dense in $B(\mathcal{H})'$.
		\item [(ii)] $m(W)<1$.
	\end{itemize}
\end{theorem}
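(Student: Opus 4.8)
The plan is to adapt the argument already used for the cosine functions --- specifically the proof that density of $\mathcal{P}(\{C^{(n)*}_{U,W}\})$ in $B(\mathcal{H})'$ forces $m(W)<1$ --- under the same standing hypotheses ($W$ invertible, $U$ unitary, and a finite dimensional subspace $K$ of $\mathcal{H}$ with $U^n(K)\perp K$ for all $n\ge N$); removing the cosine average only shortens the computation. Throughout I will use the formula $(T^*_{U,W})^n\varphi=M_{W^n}\varphi\circ\widetilde{U}^n$ for $\varphi\in B(\mathcal{H})'$; that left multiplication $\widehat{W}\colon F\mapsto WF$ and the right multiplications $\widetilde{U}\colon F\mapsto FU$ and $\widetilde{P_K}\colon F\mapsto FP_K$ are bounded operators on $B(\mathcal{H})$, with $\widehat{W}$ and $\widetilde{U}$ commuting and $\widetilde{U}$ an isometric automorphism (since $U$ is unitary), so $\psi\mapsto\psi\circ\widetilde{U}^n$ is isometric on $B(\mathcal{H})'$; the identities $M_{W^n}\psi=\psi\circ\widehat{W}^n$ and $P_{U^n(K)}=U^nP_KU^{-n}$; and the bound $\|\widehat{W}^{-n}\|=\|W^{-n}\|\le\|W^{-1}\|^n=m(W)^{-n}$.

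First I would fix $K$ and $N$ and, by Hahn--Banach, choose $\phi_K\in B(\mathcal{H})'$ with $\phi_K(P_K)=1=\|\phi_K\|$; put $\widetilde{\phi_K}:=\phi_K\circ\widetilde{P_K}$, so that $\|\widetilde{\phi_K}\|=1$ and $\widetilde{\phi_K}\circ\widetilde{P_K}=\widetilde{\phi_K}$. Using (i), I would choose periodic functionals $\varphi_k$ with $\|\varphi_k-\widetilde{\phi_K}\|<4^{-k}$ and, by passing to a multiple of the period of $\varphi_k$, a strictly increasing sequence $(n_k)$ of positive integers with $n_1\ge N$ and $(T^*_{U,W})^{n_k}\varphi_k=\varphi_k$, i.e.\ $\varphi_k=M_{W^{n_k}}\varphi_k\circ\widetilde{U}^{n_k}$.

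The crux is a pair of estimates for $\psi_k:=\varphi_k\circ\widetilde{P_K}$. Composing $\|\varphi_k-\widetilde{\phi_K}\|<4^{-k}$ on the right with the contraction $\widetilde{P_K}$ and using $\widetilde{\phi_K}\circ\widetilde{P_K}=\widetilde{\phi_K}$, $\|\widetilde{\phi_K}\|=1$ gives $\|\psi_k\|>1-4^{-k}$. Composing the same inequality on the right with $\widetilde{P_{U^{n_k}(K)}}$ and using $\widetilde{\phi_K}\circ\widetilde{P_{U^{n_k}(K)}}=0$ --- which is exactly the consequence of $U^{n_k}(K)\perp K$, equivalently $P_{U^{n_k}(K)}P_K=0$, and the one point where the orthogonality hypothesis is used --- gives $\|\varphi_k\circ\widetilde{P_{U^{n_k}(K)}}\|<4^{-k}$; now rewrite the left side by periodicity and the intertwining identity $\widetilde{U}^{n_k}\circ\widetilde{P_{U^{n_k}(K)}}=\widetilde{P_K}\circ\widetilde{U}^{n_k}$ (immediate from $P_{U^{n_k}(K)}=U^{n_k}P_KU^{-n_k}$), getting
\[
\varphi_k\circ\widetilde{P_{U^{n_k}(K)}}=M_{W^{n_k}}\varphi_k\circ\widetilde{U}^{n_k}\circ\widetilde{P_{U^{n_k}(K)}}=M_{W^{n_k}}\varphi_k\circ\widetilde{P_K}\circ\widetilde{U}^{n_k}=\psi_k\circ\widehat{W}^{n_k}\circ\widetilde{U}^{n_k},
\]
where the last equality uses $M_{W^{n_k}}\varphi_k\circ\widetilde{P_K}=\varphi_k\circ\widetilde{P_K}\circ\widehat{W}^{n_k}$. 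Since $\psi\mapsto\psi\circ\widetilde{U}^{n_k}$ is isometric, this yields $\|\psi_k\circ\widehat{W}^{n_k}\|<4^{-k}$. Then from $\psi_k=(\psi_k\circ\widehat{W}^{n_k})\circ\widehat{W}^{-n_k}$ and $\|\widehat{W}^{-n_k}\|\le m(W)^{-n_k}$ we get $1-4^{-k}<4^{-k}\,m(W)^{-n_k}$, so $m(W)^{n_k}<\frac{1}{4^k-1}\to0$, and since $(n_k)$ is strictly increasing this forces $m(W)<1$, which is (ii). (Alternatively, pick a unit vector $F_k\in B(\mathcal{H})$ with $|\psi_k(F_k)|>1-4^{-k}$, evaluate $\psi_k\circ\widehat{W}^{n_k}$ at $W^{-n_k}F_k$, and bound $\|W^{-n_k}F_k\|\le m(W)^{-n_k}$ to reach the same inequality, closer to the style of the earlier proofs.)

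I expect the only genuine work to be bookkeeping: keeping the commuting families of left multiplications ($\widehat{W}$) and right multiplications ($\widetilde{U},\widetilde{P_K}$) on $B(\mathcal{H})$, and their transposes on $B(\mathcal{H})'$, straight --- in particular verifying the intertwining $\widetilde{U}^{n}\circ\widetilde{P_{U^n(K)}}=\widetilde{P_K}\circ\widetilde{U}^{n}$, which is what allows the orthogonality assumption to annihilate the $\widetilde{\phi_K}$-term. There is no conceptual obstacle beyond that; as in the remark after the corollaries of Section 3, $U$ being unitary serves only to make $\widetilde{U}$ an isometry and, via $U^n(K)\perp K$, to supply the cancellation that pushes $m(W)$ strictly below $1$.
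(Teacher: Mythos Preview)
Your proof is correct and follows essentially the same approach as the paper: approximate $\widetilde{\phi_K}$ by periodic functionals $\varphi_k$, compose with $\widetilde{P_K}$ for a lower bound on $\|\varphi_k\circ\widetilde{P_K}\|$ and with $\widetilde{P_{U^{n_k}(K)}}$ (using orthogonality to kill the $\widetilde{\phi_K}$ term, then periodicity and the intertwining identity) for an upper bound on $\|M_{W^{n_k}}(\varphi_k\circ\widetilde{P_K})\|$, and deduce $m(W)^{n_k}\to 0$. The only cosmetic difference is that your main argument uses the operator-norm inequality $\|\psi_k\|\le\|\psi_k\circ\widehat{W}^{n_k}\|\,m(W)^{-n_k}$ directly, while the paper evaluates at a specific $W^{-n_k}F_k$---a variant you yourself mention as an alternative.
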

\begin{proof}
	Let $K$ be a finite dimensional subspace of $\mathcal{H}$ s.t. $U^n(K)\perp K$ for all $n\geq N$. Let $\phi_K\in B(\mathcal{H})'$ be such that $\phi_K(P_K)=1$ and $\|\phi_K\|=1$. Set $\widetilde{\phi_K}:=\phi_K\circ\widetilde{P_K}$. Then, $\widetilde{\phi_K}(P_K)=1$ and 
	$\|\widetilde{\phi_K}\|=1$. For each $k\in\mathbb{N}$, there exists $\varphi_k\in \mathcal{P}(T_{U,W}^{*^n})$ such that 
	$$\frac{1}{k^2}\geq \|\varphi_k -\widetilde{\phi_K}\|.$$
	Hence 
	$$\frac{1}{k^2}\geq \|(\varphi_k-\widetilde{\phi_K})\circ \widetilde{P_K}\|=\|(\varphi_k\circ \widetilde{P_K}-\widetilde{\phi_K})\|,$$
	which gives 
	$$\|\varphi_k\circ\widetilde{P_K}\|>1-\frac{1}{k^2}.$$
	Thus, for each $k\in\mathbb{N}$, there exists an $F_k\in B(\mathcal{H})$ s.t. $F_k\neq 0$ and 
	$$|\varphi_k(F_k P_K)|\geq (1-\frac{1}{k^2})\,\|F_k\|.$$
	We may assume that there exists a strictly increasing sequence of positive integers 
	$N<n_1<n_2<\cdots$ such that
	$T_{U,W}^{*^{n_k}}\varphi_k=\varphi_k$. We get 
	\begin{align*}
	\frac{1}{k^2}&\geq \|\varphi_k-\widetilde{\phi_K}\|\\
	&=\|T_{U,W}^{*^{n_k}}\varphi_k-\widetilde{\phi_K}\|\\
	&=\|(M_{W^{n_k}}\varphi_k)\circ \widetilde{U}^{n_k}-\widetilde{\phi_K}\|\\
	&\geq \|\Big((M_{W^{n_k}}\varphi_k)\circ \widetilde{U}^{n_k}-\widetilde{\phi_K}\Big)\circ \widetilde{P_{U^{n_k}(K)}}\|\\
	&=\|\Big((M_{W^{n_k}}\varphi_k)\circ \widetilde{U}^{n_k}\Big)\Big(\widetilde{U}^{-n_k}\circ\widetilde{P_K}\circ\widetilde{U}^{n_k}\Big)\|\\
	&=\|(M_{W^{n_k}}\varphi_k)\circ\widetilde{P_K}\|=\|(M_{W^{n_k}}(\varphi_k\circ\widetilde{P_K})\|.
	\end{align*}
	Hence, 
	\begin{align*}
	\frac{1}{k^2}\,\|W^{-n_k}F_k\|&\geq \|(M_{W^{n_k}}(\varphi_k\circ\widetilde{P_K})\|\,\|W^{-n_k}F_k\|\\
	&\geq \|(M_{W^{n_k}}(\varphi_k\circ\widetilde{P_K})(W^{-n_k}F_k)\|\\
	&=|\varphi_k(F_k P_K)|\\
	&\geq (1-\frac{1}{k^2})\,\|F_k\|\\
	&\geq (1-\frac{1}{k^2})\,(m(W))^{n_k}\,\|W^{-n_k}F_k\|.
	\end{align*}
	Dividing on the both sides of the inequality by 
	$\|W^{-n_k}F_k\|$ and letting $k\rightarrow\infty$, we obtain the implication.
\end{proof}
\begin{theorem}
	We have 
	${\rm (i)}\Rightarrow {\rm (ii)}$:
	\begin{itemize}
		\item [(i)] $P(S_{U,W}^{*^n})$ is dense in $B(\mathcal{H})'$.
		\item [(ii)] $m(W^{-1})=\|W\|^{-1}<1$, that is $\|W\|>1$.
	\end{itemize}
\end{theorem}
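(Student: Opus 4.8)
The plan is to obtain this statement as a formal consequence of the preceding theorem via the substitution $U\mapsto U^{-1}$, $W\mapsto W^{-1}$, so that essentially no new estimate is needed. First I would record that, straight from the definition of $T_{U,W}$ and the formula for its inverse,
\begin{equation*}
S_{U,W}=T^{-1}_{U,W}=T_{U^{-1},W^{-1}},
\end{equation*}
and consequently, passing to adjoints on $B(\mathcal H)'$, $S_{U,W}^{*^{n}}=T_{U^{-1},W^{-1}}^{*^{n}}$ for every $n\in\mathbb N_0$. Hence the periodic sets coincide, $\mathcal P\big(S_{U,W}^{*^{n}}\big)=\mathcal P\big(T_{U^{-1},W^{-1}}^{*^{n}}\big)$, and hypothesis (i) is literally the statement that $\mathcal P\big(T_{U^{-1},W^{-1}}^{*^{n}}\big)$ is dense in $B(\mathcal H)'$.

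Second, I would verify that the orthogonality hypothesis used in the proof of the preceding theorem is stable under $U\mapsto U^{-1}$. If $K$ is a finite dimensional subspace of $\mathcal H$ and $N\in\mathbb N$ is such that $U^{n}(K)\perp K$ for all $n\geq N$, then, since $U$ is unitary, $\langle U^{-n}x,y\rangle=\langle x,U^{n}y\rangle$ for all $x,y\in\mathcal H$; therefore $U^{-n}(K)\perp K$ is equivalent to $K\perp U^{n}(K)$, so the very same $K$ and $N$ witness the assumption for the unitary operator $U^{-1}$.

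I would then apply the preceding theorem to the pair $(U^{-1},W^{-1})$: here the invertible operator is $W^{-1}$, the unitary one is $U^{-1}$, and the orthogonality condition holds by the previous step. Since $\mathcal P\big(T_{U^{-1},W^{-1}}^{*^{n}}\big)$ is dense in $B(\mathcal H)'$, the theorem delivers $m(W^{-1})<1$. Recalling the identity $m(W^{-1})=\|W\|^{-1}$ from the Preliminaries, this is exactly $\|W\|>1$, which is (ii).

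I do not expect any genuine obstacle; the only point calling for a line of justification is the symmetry of the orthogonality hypothesis under inversion of $U$, and that is immediate from unitarity. If one prefers a self-contained argument, the conclusion can equally be reached by copying the proof of the preceding theorem with $T_{U,W}$ replaced by $S_{U,W}$ and every $W^{n_k}$ by $W^{-n_k}$ throughout, using the bound $\|W^{-n_k}(W^{n_k}F_k)x\|\geq m(W^{-1})^{n_k}\,\|W^{n_k}F_k x\|$ in place of its analogue for $W$; this reproduces the final inequality $\frac{1}{k^2}\geq(1-\frac{1}{k^2})\,m(W^{-1})^{n_k}$ for all $k$, and hence $m(W^{-1})<1$.
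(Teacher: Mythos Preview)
Your proposal is correct and matches the paper, whose own proof is simply ``Similar to the previous theorem.'' Your substitution $S_{U,W}=T_{U^{-1},W^{-1}}$ together with the observation that the orthogonality hypothesis is symmetric under $U\mapsto U^{-1}$ is precisely what makes that one-line proof work, and your alternative of rerunning the previous argument with $W^{-n_k}$ in place of $W^{n_k}$ is the literal reading of ``similar.''
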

\begin{proof}	
	Similar to the previous theorem.
\end{proof}
\begin{theorem}\label{thm57}
	Let $B(\mathcal{H})$ be equipped with the strong topology, and $B(\mathcal{H})'$ be equipped with the $w^*$-topology, where $B(\mathcal{H})'$ is the dual of $B(\mathcal{H})$. Then we have ${\rm (ii)}\Rightarrow {\rm (i)}$:
	\begin{itemize}
		\item [(i)] $\{T_{U,W}^{*^n}\}$ and $\{S_{U,W}^{*^n}\}$ are topologically transitive on $B(\mathcal{H})'$.
		\item [(ii)] for every $n\in\mathbb{N}$ there exist an increasing sequence $\{n_k\}\subseteq \mathbb{N}$ and sequences of operators $\{G_k\}$ and $\{D_k\}$ in $B(\mathcal{H})$ such that same as theorem 3.2 in the draft with 
		$$\lim_{k\rightarrow \infty}\|G_kW^{n_k}\|=\lim_{k\rightarrow \infty}\|D_kW^{-n_k}\|=0,$$
		and 
		$${\rm s}\!-\!\!\lim_{k\rightarrow \infty}G_k={\rm s}\!-\!\!\lim_{k\rightarrow \infty}D_k=P_n,$$
		where ${\rm s}\!-\!\!\lim$ denotes the limit in the strong operator topology.
	\end{itemize}
\end{theorem}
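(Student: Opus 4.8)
The plan is to mimic the proof of Theorem \ref{thm41} (and of Theorem \ref{thm51}), transporting it to the predual--dual pairing $\langle B(\mathcal{H}), B(\mathcal{H})'\rangle$ with $B(\mathcal{H})$ carrying the strong operator topology and $B(\mathcal{H})'$ the $w^*$-topology. Recall from the displayed formulas preceding Theorem \ref{thm54} that, with $T_{U,W}$ viewed as an operator on $B(\mathcal{H})$, one has $T_{U,W}^{*}(\varphi)=\varphi_W\circ\widetilde{U}$ and $S_{U,W}^{*}(\varphi)=\varphi_{W^{-1}}\circ\widetilde{U}^{-1}$; iterating, $T_{U,W}^{*^{n}}(\varphi)(F)=\varphi(W^nFU^n)$ and $S_{U,W}^{*^{n}}(\varphi)(F)=\varphi(W^{-n}FU^{-n})$. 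First I would fix two non-empty $w^*$-open sets $\mathcal{O}_1,\mathcal{O}_2\subseteq B(\mathcal{H})'$ and pick $\varphi_1\in\mathcal{O}_1$, $\varphi_2\in\mathcal{O}_2$. Since $P_nF\to F$ strongly for every $F\in B(\mathcal{H})$, the maps $\varphi\mapsto \varphi\circ\widetilde{P_n}$ converge to the identity in the appropriate sense, so there is an $n_0\in\mathbb{N}$ with $\varphi_i\circ\widetilde{P_{n_0}}\in\mathcal{O}_i$ for $i=1,2$; set $m:=n_0$ and invoke (ii) to obtain the sequences $\{n_k\}$, $\{G_k\}$, $\{D_k\}$ in $B(\mathcal{H})$ with $\|G_kW^{n_k}\|\to 0$, $\|D_kW^{-n_k}\|\to 0$ and $G_k,D_k\to P_m$ strongly.

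Next I would define, for each $k$, the connecting functional
\begin{equation*}
\psi_k:=(\varphi_1\circ\widetilde{G_k})\circ\widetilde{U}^{-n_k}\;+\;S_{U,W}^{*^{n_k}}\big((\varphi_2\circ\widetilde{D_k})\big),
\end{equation*}
the analogue of the operator $V_k$ in the proof of Theorem \ref{thm41}. One then has to check two things: (a) $\psi_k\to\varphi_1$ in the $w^*$-topology, and (b) $T_{U,W}^{*^{n_k}}\psi_k\to\varphi_2$ in the $w^*$-topology. For (a), evaluating on a fixed $F\in B(\mathcal{H})$, the first summand gives $\varphi_1(G_kU^{-n_k}F)$; writing $U^{-n_k}F=U^{-n_k}F$ and using that $G_k\to P_m$ strongly together with the continuity of $\varphi_1$ on strongly convergent bounded nets handles the leading term, and the second summand evaluates to $\varphi_2(W^{-n_k}D_k F U^{-n_k})$, which is controlled by $\|W^{-n_k}D_k\|\,\|F\|\to 0$. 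For (b), $T_{U,W}^{*^{n_k}}\psi_k$ evaluated on $F$ produces a term $\varphi_1(W^{n_k}G_kU^{-n_k}FU^{n_k}\cdots)$ bounded by $\|W^{n_k}G_k\|=\|G_kW^{n_k}\|\to 0$ — here I would adjust the placement of the twist $\widetilde{U}^{-n_k}$ in the definition of $\psi_k$ so that the unitary factors cancel exactly — plus a term $\varphi_2(D_k F)$ with $D_k\to P_m$ strongly, which converges to $\varphi_2(P_m F)=(\varphi_2\circ\widetilde{P_m})(F)$, and the latter lies in $\mathcal{O}_2$ by the choice of $m$. Then for large $k$ we get $T_{U,W}^{*^{n_k}}(\mathcal{O}_1)\cap\mathcal{O}_2\neq\varnothing$, and the symmetric argument (interchanging the roles of $T$ and $S$, i.e. of $W^{n_k}$ and $W^{-n_k}$, and of $G_k$ and $D_k$) gives transitivity of $\{S_{U,W}^{*^{n}}\}$.

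The main obstacle I expect is purely bookkeeping with the unitary conjugations: in the norm setting of Theorem \ref{thm32} the factors $U^{n_k}$, $U^{-n_k}$ are isometries and disappear under the norm, but in the $w^*$-setting they sit \emph{inside} the functional and one must arrange the definition of $\psi_k$ so that, after applying $T_{U,W}^{*^{n_k}}$, every surplus power of $U$ is paired with its inverse before one estimates. Getting the placement of $\widetilde{U}^{\pm n_k}$ right — so that (a) and (b) both come out clean — is the only delicate point; once the algebra is organized correctly, the estimates are exactly those already used in the proofs of Theorems \ref{thm41} and \ref{thm51}, with operator norms replaced by the evaluation bound $|\varphi(A)|\le\|\varphi\|\,\|A\|$ and strong convergence of $\{G_k\},\{D_k\}$ supplying the remaining limits. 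I would therefore present the argument tersely, defining $\psi_k$ explicitly, verifying (a) and (b) on a generic $F\in B(\mathcal{H})$, and then writing ``proceed as in the proof of Theorem \ref{thm54}'' for the routine conclusion.
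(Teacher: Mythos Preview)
Your overall plan---approximate $\varphi_i$ by ``truncated'' functionals, invoke (ii), build a connecting functional $\psi_k$, verify two $w^*$-limits---is exactly the paper's. The gap is in the choice of multiplication map, and it is not just bookkeeping. You work with the \emph{right}-multiplication $\widetilde{D}(F)=FD$, so your first summand actually evaluates to $\varphi_1(FU^{-n_k}G_k)$ (not $\varphi_1(G_kU^{-n_k}F)$ as you write), and after applying $T_{U,W}^{*^{n_k}}$ it becomes $\varphi_1(W^{n_k}FG_k)$. To kill this you assert $\|W^{n_k}G_k\|=\|G_kW^{n_k}\|$, but that identity is false in general ($\|AB\|\neq\|BA\|$), and the hypothesis $\|G_kW^{n_k}\|\to 0$ gives no control whatsoever over $\|W^{n_k}FG_k\|$. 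No placement of a $\widetilde{U}^{\pm n_k}$ twist repairs this: with right multiplication the smoothing factor $G_k$ always lands to the right of $F$, hence separated from $W^{n_k}$.

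The paper uses the \emph{left}-multiplication map $M_D\varphi(F):=\varphi(DF)$ instead, and no unitary twist at all. With
\[
\psi_k:=M_{P_nG_k}\varphi_1+S_{U,W}^{*^{n_k}}(M_{P_nD_k}\varphi_2),
\]
one gets $T_{U,W}^{*^{n_k}}(M_{P_nG_k}\varphi_1)(L)=\varphi_1(P_nG_kW^{n_k}LU^{n_k})$, and now the product $G_kW^{n_k}$ appears exactly as in hypothesis~(ii), so this term is bounded by $\|G_kW^{n_k}\|\,\|L\|\to 0$. The surviving term $M_{P_nD_k}\varphi_2(L)=\varphi_2(P_nD_kL)$ converges to $M_{P_n}\varphi_2(L)$ because $P_nD_k\to P_n$ strongly and $\varphi_2$ is SOT-continuous. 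The companion functional $\eta_k:=M_{P_nD_k}\varphi_1+T_{U,W}^{*^{n_k}}(M_{P_nG_k}\varphi_2)$ handles $S_{U,W}^{*}$. So the fix is simply to replace every $\widetilde{G_k},\widetilde{D_k},\widetilde{P_n}$ in your sketch by $M_{G_k},M_{D_k},M_{P_n}$ and drop the extraneous $\widetilde{U}^{-n_k}$.
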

\begin{proof}
	Let $\mathcal{O}_1$ and $\mathcal{O}_2$ be two non-empty open subsets of $B(\mathcal{H})'$ in the $w^*$-topology. Then, $P_nF\rightarrow F$ in strong topology for each $F\in B(\mathcal{H})$. If $\varphi_1\in \mathcal{O}_1$ and $\varphi_2\in \mathcal{O}_2$, then $M_{P_n}\varphi_1\rightarrow\varphi_1$ and $M_{P_n}\varphi_2\rightarrow\varphi_2$ as $n\rightarrow\infty$ in $w^*$-topology. Therefore, there exists an $n\in\mathbb{N}$ such that 
	$M_{P_n}\varphi_1\in\mathcal{O}_1$ and $M_{P_n}\varphi_2\in\mathcal{O}_2$. Set $K:=L_n$.
	 We get for all $D,L\in B(\mathcal{H})$,
	$$T_{U,W}^{*^{n_k}}(M_D\varphi_1)(L)=\varphi_1(DW^{n_k}LU^{n_k}).$$
	Hence, for each $L\in B(\mathcal{H})$ we have
	\begin{align*}
	|T_{U,W}^{*^{n_k}}(M_{P_nG_k}\varphi_1)(L)|&=|\varphi_1(P_nG_kW^{n_k}LU^{n_k})|.
	\end{align*} 
	 Thus, $T_{U,W}^{*^{n_k}}(M_{P_nG_k}\varphi_1)\rightarrow 0$ as $k\rightarrow\infty$, in $w^*$-topology. Similarly, $$S_{U,W}^{*^{n_k}}(M_{P_nD_k}\varphi_1)\rightarrow 0,\quad T_{U,W}^{*^{n_k}}(M_{P_nG_k}\varphi_2)\rightarrow 0,\quad S_{U,W}^{*^{n_k}}(M_{P_nD_k}\varphi_2)\rightarrow 0,$$
	  in $w^*$-topology. Set 
	$$\psi_k:=M_{P_nG_k}\varphi_1+S_{U,W}^{*^{n_k}}(M_{P_nD_k}\varphi_2)$$
	and 
	$$\eta_k:=M_{P_nD_k}\varphi_1+T_{U,W}^{*^{n_k}}(M_{P_nG_k}\varphi_2).$$
	Then,
	$\psi_k\rightarrow M_{P_n}\varphi_1$ and $T_{U,W}^{*^{n_k}}(\psi_k)\rightarrow M_{P_n}\varphi_2$. Also,
	$\eta_k\rightarrow M_{P_n}\varphi_1$ and $S_{U,W}^{*^{n_k}}(\eta_k)\rightarrow M_{P_n}\varphi_2$, in $w^*$-topology. This completes the proof.
\end{proof}

\vspace{.1in}
\end{document}